\documentclass[11pt]{amsart}

\usepackage[T1]{fontenc}
\usepackage{lmodern}
\usepackage{amsmath,amssymb,mathtools}
\usepackage{graphicx}
\usepackage{tikz}
\usepackage{microtype}
\usepackage{flafter}
\usepackage{placeins}
\usepackage{amsrefs}
\usepackage{hyperref}
\usepackage[nameinlink,capitalize]{cleveref}
\usepackage{xcolor}

\newtheorem{theorem}{Theorem}[section]
\newtheorem{proposition}[theorem]{Proposition}
\newtheorem{lemma}[theorem]{Lemma}
\newtheorem{corollary}[theorem]{Corollary}
\theoremstyle{remark}
\newtheorem{remark}[theorem]{Remark}

\DeclareMathOperator{\graph}{graph}
\DeclareMathOperator{\ext}{ext}
\newcommand{\R}{\mathbb R}
\newcommand{\HH}{\mathfrak O}
\newcommand{\BH}{\mathsf B_{\HH}}
\newcommand{\SH}{\mathsf S_{\HH}}
\newcommand{\normH}[1]{\lVert #1\rVert_{\HH}}

\title[Quadratic forms on an octagonal sector]
{The Unit Ball of Quadratic Forms on an Octagonal Sector}

\author[Han]{Manwook Han}
\address[Manwook Han]{\mbox{}\newline\indent  Department of Mathematics,\newline\indent Chungbuk National University\newline\indent Cheongju, Chungbuk 28644, \newline\indent Republic of Korea}
\email{mwhan0828@gmail.com}

\author[Kim]{Sun Kwang Kim}
\address[Sun Kwang Kim]{\mbox{}\newline\indent  Department of Mathematics,\newline\indent Chungbuk National University\newline\indent Cheongju, Chungbuk 28644, \newline\indent Republic of Korea}
\email{skk@chungbuk.ac.kr}

\author[Mu\~noz-Fern\'andez]{Gustavo A. Mu\~noz-Fern\'andez}
\address[Gustavo A. Mu\~noz-Fern\'andez]{\mbox{}\newline\indent  Instituto de Matem\'atica Interdisciplinar (IMI) \newline\indent and \newline\indent Departamento de An\'alisis Matem\'atico y Matem\'atica Aplicada, \newline\indent Facultad de Ciencias Matem\'aticas, \newline\indent Universidad Complutense de Madrid, \newline\indent Plaza de Ciencias 3, \newline\indent 28040 Madrid, Spain}
\email{gustavo\_fernandez@mat.ucm.es}

\author[Seoane]{Juan B. Seoane--Sep\'ulveda}
\address[Juan B. Seoane--Sep\'ulveda]{\mbox{}\newline\indent  Instituto de Matem\'atica Interdisciplinar (IMI) \newline\indent and \newline\indent Departamento de An\'alisis Matem\'atico y Matem\'atica Aplicada, \newline\indent Facultad de Ciencias Matem\'aticas, \newline\indent Universidad Complutense de Madrid, \newline\indent Plaza de Ciencias 3, \newline\indent 28040 Madrid, Spain}
\email{jseoane@ucm.es}

\subjclass[2020]{Primary 52A21; Secondary 46B04, 46G25}
\keywords{Quadratic forms, polynomial norms, unit balls, unit spheres, extreme points, nonsymmetric convex bodies, octagonal sectors}

\begin{document}

\begin{abstract}
Let
\[
 \HH=\{(x,y)\in[0,1]^2:x+y\le \sqrt2\}
\]
be the first-quadrant sector of a regular octagon.  For quadratic forms
\(P(x,y)=ax^2+bxy+cy^2\), we study the supremum norm over \(\HH\).  We obtain a complete five-region formula for the norm, according to whether the norming contact occurs at an endpoint or in the interior of one of the three radial sides.  We then prove that the projection of the unit ball onto the \(ac\)-plane is exactly \([-1,1]^2\), compute both endpoints of every vertical section, and thereby parametrize the entire unit sphere.  Finally, we characterize the extreme points of the unit ball as four explicit curves, their negatives, and four pairs of isolated points.  The resulting description is fully explicit and reduces subsequent convex extremal problems on this polynomial space to four one-parameter families and finitely many isolated polynomials.
\end{abstract}

\maketitle

\tableofcontents

\section{Introduction and preliminaries}

The geometry of finite-dimensional polynomial spaces is one of the classical meeting points of convexity, approximation theory, and Banach space methods.  Already in one variable, the structure of polynomial unit balls and their extreme points leads to subtle extremal problems; see the pioneering paper of Konheim and Rivlin \cite{KoRi}.  In the quadratic case, explicit norm formulas go back to Aron and Klimek \cite{AK}, while the unit balls of spaces of homogeneous polynomials on classical two-dimensional Banach spaces were described by Choi and Kim and by Grecu \cites{CK,Grecu}.  These results illustrate a recurring theme: whenever the norm can be written in a genuinely explicit way, one gains access not only to the unit ball itself, but also to sharp polynomial inequalities and to the extremal structure behind them.

This program becomes markedly richer on nonsymmetric convex bodies.  In that setting, the polynomial norm is no longer governed by absolute symmetries, and the regions controlling the norm often have to be identified one by one.  Complete geometric descriptions are known for several remarkable families, including nonsymmetric planar convex bodies \cite{MRS}, the unit square and related polynomial inequalities \cite{GMSS}, circular sectors \cites{BMRS,AJMS}, spaces of homogeneous trinomials \cites{JMR,GMS}, and hexagonal norms on the plane \cites{KimHex,KimExp}.  Closely related polynomial and multilinear geometries for octagonal norms have also been investigated in \cites{KimOctPoly,KimOctBil,KimOctGeom,KimOctNorming}.  General background and further applications can be found in the recent monographs \cites{FGMMRS,GJMMMS}.

A second motivation comes from the Krein--Milman method.  Once the extreme points of a polynomial unit ball are known, every continuous convex functional attains its maximum at those points.  This reduction has proved highly effective in the study of sharp Bernstein--Markov, polarization, unconditional, and Bohnenblust--Hille type inequalities; see, for example, \cites{AMRS,GMSU,JMMS}.  Obtaining a workable description of the extreme points is therefore not a purely geometric exercise: it is the key step that makes subsequent extremal applications feasible.

In this paper we study the space of quadratic forms on the nonsymmetric convex body
\begin{equation}\label{eq:defH}
 \HH=\{(x,y)\in[0,1]^2:x+y\le\sqrt2\}.
\end{equation}
The centrally symmetric polygon
\[
 \widetilde{\mathfrak O}
 :=\{(x,y)\in\R^2:|x|\le1,\ |y|\le1,\ |x|+|y|\le\sqrt2\}
\]
is a regular octagon, and \(\HH=\widetilde{\mathfrak O}\cap[0,\infty)^2\).  Thus \eqref{eq:defH} is precisely its first-quadrant sector.  If
\[
 \tau:=\sqrt2-1,
\]
then \(\tau^{-1}=\sqrt2+1\), \(\tau^2=3-2\sqrt2\), and \(1-\tau^2=2\tau\).  The non-axial part of the boundary of \(\HH\) consists of the three segments
\begin{align*}
 L_1&=\{(x,1):0\le x\le\tau\},\\
 L_2&=\{(x,\sqrt2-x):\tau\le x\le1\},\\
 L_3&=\{(1,y):0\le y\le\tau\}.
\end{align*}
They are the only places where a nonzero homogeneous quadratic form can attain its norm.  The geometry of the body is shown in \cref{fig:domain}.

For
\[
 P(x,y)=ax^2+bxy+cy^2,
\]
we consider the supremum norm
\[
 \normH{P}:=\sup\{|P(x,y)|:(x,y)\in\HH\}.
\]
Identifying \(P\) with \((a,b,c)\in\R^3\), we denote by
\[
 \BH=\{(a,b,c):\normH{(a,b,c)}\le1\},\qquad
 \SH=\{(a,b,c):\normH{(a,b,c)}=1\}
\]
the corresponding unit ball and unit sphere, and by
\(\pi_{ac}(a,b,c)=(a,c)\) the projection onto the \(ac\)-plane.

Our main results provide a complete geometric description of this norm.  First, we derive an explicit regional formula for \(\normH{(a,b,c)}\), valid for all coefficients and written in closed form on five coefficient regions.  Second, we prove that the projection of \(\BH\) is exactly the square \([-1,1]^2\), compute both endpoints of every vertical section, and obtain a complete parametrization of \(\SH\).  Finally, we determine all extreme points of \(\BH\): they consist of four explicit curves, their negatives, and four pairs of isolated points.  To the best of our knowledge, this is the first complete description of the unit ball geometry for quadratic forms on the nonsymmetric octagonal sector \eqref{eq:defH}.

The proofs combine three ingredients that are characteristic of this area: a reduction of the norm computation to one-dimensional restrictions on the radial boundary, an exact comparison of the competing endpoint and stationary values, and a careful geometric analysis of the projected coefficient regions.  The resulting formulas are sufficiently explicit to be used directly in subsequent applications of the Krein--Milman method.

\begin{figure}[t]
\centering
\begin{tikzpicture}[scale=3.05]
  \pgfmathsetmacro{\tt}{sqrt(2)-1}
  \draw[->] (-0.18,0) -- (1.18,0) node[right] {$x$};
  \draw[->] (0,-0.18) -- (0,1.18) node[above] {$y$};
  \draw[gray!55,dashed,thick]
    (1,\tt)--(\tt,1)--(-\tt,1)--(-1,\tt)--(-1,-\tt)--(-\tt,-1)--(\tt,-1)--(1,-\tt)--cycle;
  \fill[gray!18] (0,0)--(1,0)--(1,\tt)--(\tt,1)--(0,1)--cycle;
  \draw[very thick] (0,0)--(1,0)--(1,\tt)--(\tt,1)--(0,1)--cycle;
  \draw[very thick] (0,1)--(\tt,1);
  \draw[very thick] (\tt,1)--(1,\tt);
  \draw[very thick] (1,\tt)--(1,0);
  \node at (0.47,0.55) {$\HH$};
  \node[above] at (0.2,1.00) {$L_1$};
  \node[above right] at (0.73,0.74) {$L_2$};
  \node[right] at (1.00,0.19) {$L_3$};
  \fill (\tt,1) circle (0.6pt);
  \fill (1,\tt) circle (0.6pt);
  \node[above right,font=\scriptsize,xshift=1pt,yshift=1pt] at (\tt,1) {$(\tau,1)$};
  \node[right,font=\scriptsize] at (1,\tt) {$(1,\tau)$};
  \fill (0,0) circle (0.6pt);
  \fill (1,0) circle (0.6pt);
  \fill (0,1) circle (0.6pt);
  \node[below left,font=\scriptsize] at (0,0) {$(0,0)$};
  \node[below,font=\scriptsize] at (0.88,0) {$(1,0)$};
  \node[left,font=\scriptsize] at (0,0.9) {$(0,1)$};
\end{tikzpicture}
\caption{The nonsymmetric body \(\HH\) and the three radial boundary segments \(L_1\), \(L_2\), and \(L_3\).  The dashed polygon is the centrally symmetric regular octagon whose first-quadrant sector is \(\HH\).}
\label{fig:domain}
\end{figure}

The paper is organized as follows.  In \cref{sec:norm} we prove the explicit norm formula.  In \cref{sec:projection} we determine the projection of \(\BH\), compute the vertical sections, and parametrize \(\SH\).  The complete description of the extreme points is then obtained in \cref{sec:extreme}.

\section{An explicit formula for the norm}\label{sec:norm}

We now derive a regional formula for
\[
 \normH{(a,b,c)}
 =\sup\{|ax^2+bxy+cy^2|:(x,y)\in\HH\}.
\]
As in the works cited above, the case in which the mixed coefficient vanishes is treated first, and the case $b\ne0$ is reduced to a two-dimensional problem by normalizing $b=1$.

\subsection{The case \texorpdfstring{$b=0$}{b=0}}

\begin{proposition}\label{prop:bzero}
For every $a,c\in\R$,
\begin{equation}\label{eq:bzero}
 \normH{(a,0,c)}
 =\max\bigl\{|a|,|c|,|\tau^2a+c|,|a+\tau^2c|\bigr\}.
\end{equation}
\end{proposition}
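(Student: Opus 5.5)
We reduce the computation to the boundary of \(\HH\) and then to three one-variable problems. Since \(P(x,y)=ax^2+cy^2\) is homogeneous of degree two, \(|P(tx,ty)|=t^2|P(x,y)|\). Every point of \(\HH\) other than the origin is of the form \(t(x,y)\) with \(0\le t\le1\) and \((x,y)\in L_1\cup L_2\cup L_3\), because \(\HH\) is star-shaped with respect to the origin. Hence the supremum over \(\HH\) equals the maximum of \(|P|\) over \(L_1\cup L_2\cup L_3\), and we examine each segment in turn.

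On \(L_1\) we have \(P(x,1)=ax^2+c\) for \(0\le x\le\tau\). This is monotone in \(x^2\in[0,\tau^2]\), so its absolute value is maximized at an endpoint, giving \(\max\{|c|,|\tau^2a+c|\}\). Symmetrically, on \(L_3\) the restriction \(P(1,y)=a+cy^2\) gives \(\max\{|a|,|a+\tau^2c|\}\). These four numbers are exactly the terms appearing in \eqref{eq:bzero}, and each is attained at a point of \(\HH\), namely \((0,1)\), \((\tau,1)\), \((1,0)\) and \((1,\tau)\). Therefore the right-hand side of \eqref{eq:bzero} is a lower bound for \(\normH{(a,0,c)}\).

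The main step is to show that \(L_2\) contributes nothing larger. There we set \(g(x)=ax^2+c(\sqrt2-x)^2\) for \(x\in[\tau,1]\). The endpoint values are \(g(\tau)=\tau^2a+c\) and \(g(1)=a+\tau^2c\), both already in the list, so only an interior critical point could matter. The function \(g\) is a quadratic in \(x\) with leading coefficient \(a+c\). If \(g\) has an interior extremum, then \(|g|\) there is attained at that extremum only when it is an interior max of \(g\) with \(g>0\), or an interior min with \(g<0\). By replacing \(P\) with \(-P\) we may assume the first case, so \(a+c<0\). We then bound \(g\) on all of \(L_2\) directly. Since \(a+c<0\), at least one of \(a,c\) is negative. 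Suppose \(a\le0\) (the other case is symmetric). Then \(g(x)\le c(\sqrt2-x)^2\). Because \(\sqrt2-x\in[\tau,1]\), this gives \(g(x)\le\max\{c,\tau^2c\}\le|c|\), which is \(|P(0,1)|\). This is the bound we need.

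It remains to handle the case where no such interior extremum exists. If \(g\) has no interior extremum of the dangerous type, then \(|g|\) is maximized at an endpoint, and we are done. We expect the sign bookkeeping in this final step to be the only delicate point.
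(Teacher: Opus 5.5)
Your proof is correct, but it takes a different route from the paper. The paper never restricts to the three sides. It substitutes $X=x^2$, $Y=y^2$, which turns $P$ into the linear function $aX+cY$. It then observes that the image of $\HH$ under $(x,y)\mapsto(x^2,y^2)$ lies in the pentagon $Q=\operatorname{conv}\{(0,0),(1,0),(1,\tau^2),(\tau^2,1),(0,1)\}$. The only nontrivial constraint, $X+Y\le1+\tau^2$, comes from maximizing the convex function $x^2+y^2$ at the vertices of $\HH$. Every vertex of $Q$ is itself an image point, so the maximum of $|aX+cY|$ over $Q$ is attained at a vertex, and the formula follows with no case analysis.

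Your argument does the same linearization implicitly on $L_1$ and $L_3$, where $P$ is affine in $x^2$ or $y^2$. On $L_2$ the image is a curved arc on the origin side of the chord $X+Y=1+\tau^2$, and there you replace the enclosure by a sign argument. A positive interior maximum forces $a+c<0$, so one coefficient is nonpositive, and dropping that term bounds $g$ by $|c|$ or $|a|$. That step is sound. The final ``no dangerous extremum'' case is also immediate: an interior maximizer of $|g|$ not matched at an endpoint must be a positive interior maximum or a negative interior minimum of $g$, and the right-hand side is invariant under $(a,c)\mapsto(-a,-c)$.

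You could streamline the case split. If $a\le0$ or $c\le0$, drop that term. If $a,c>0$, then $g$ is convex and its maximum is at an endpoint. The lower bound then follows by applying this to $-P$.

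The paper's approach is shorter and explains geometrically why exactly these four values appear: they are the values at the nonzero vertices of $Q$. Yours is more elementary and matches the boundary-restriction scheme of \cref{thm:norm}, where the squaring trick is unavailable because of the $xy$ term.
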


\begin{proof}
Put $X=x^2$ and $Y=y^2$.  Since the convex function $(x,y)\mapsto x^2+y^2$ attains its maximum over the polygon $\HH$ at a vertex,
\[
 0\le X\le1,\qquad 0\le Y\le1,
 \qquad X+Y\le1+\tau^2.
\]
Consequently,
\[
 \{(x^2,y^2):(x,y)\in\HH\}
 \subset Q:=\operatorname{conv}\{(0,0),(1,0),(1,\tau^2),(\tau^2,1),(0,1)\}.
\]
All five vertices of $Q$ belong to the set on the left.  The map $(X,Y)\mapsto aX+cY$ is affine, so the maximum of its absolute value over $Q$ is attained at one of those vertices.  Formula \eqref{eq:bzero} follows.
\end{proof}

\subsection{Normalization and the five regions}

Assume from now on that $b\ne0$.  By homogeneity in the coefficients,
\begin{equation}\label{eq:normalize-b}
 \normH{(a,b,c)}
 =|b|\,\nu\!\left(\frac ab,\frac cb\right),
 \qquad
 \nu(u,v):=\normH{(u,1,v)}.
\end{equation}
Thus it is enough to determine $\nu$ on the $uv$-plane.

The endpoint values on $L_1$ and $L_3$ are encoded by
\begin{align}
 \Phi_1(u,v)
 &:=\max\{|v|,|\tau^2u+\tau+v|\}\notag\\
 &=\left|v+\frac{\tau+\tau^2u}{2}\right|
   +\frac12|\tau+\tau^2u|,\label{eq:Phi1}\\
 \Phi_3(u,v)
 &:=\max\{|u|,|u+\tau+\tau^2v|\}\notag\\
 &=\left|u+\frac{\tau+\tau^2v}{2}\right|
   +\frac12|\tau+\tau^2v|.\label{eq:Phi3}
\end{align}
We also set
\[
 \eta:=\frac1{2\tau}=\frac{1+\sqrt2}{2},
 \qquad
 \delta:=\frac{1-\tau}{2}=1-\frac1{\sqrt2}.
\]

The three regions associated with an interior contact point are
\begin{align}
 \mathcal I_1
 &:=\left\{(u,v)\in\R^2:
 u\le-\eta,\quad v\ge-u+\frac1{4u}\right\},\label{eq:I1}\\
 \mathcal I_2
 &:=\Bigl\{(u,v)\in\R^2:
 v\le\delta+\tau u,\quad u\le\delta+\tau v,\notag\\[-1mm]
 &\hspace{24mm}2u^2+6uv-2u-1\le0,\notag\\[-1mm]
 &\hspace{24mm}2v^2+6uv-2v-1\le0\Bigr\},\label{eq:I2}\\
 \mathcal I_3
 &:=\left\{(u,v)\in\R^2:
 v\le-\eta,\quad u\ge-v+\frac1{4v}\right\}.
 \label{eq:I3}
\end{align}
Define
\begin{equation}\label{eq:Psi2}
 \Psi_2(u,v):=
 \begin{cases}
 \dfrac{1-4uv}{2(1-u-v)},&(u,v)\ne(\tfrac12,\tfrac12),\\[2mm]
 1,&(u,v)=(\tfrac12,\tfrac12).
 \end{cases}
\end{equation}
The second value is the continuous extension of the first one along $\mathcal I_2$.  Indeed, writing
\[
 u=\frac12-p,\qquad v=\frac12-q,
\]
one has $p,q\ge0$ on $\mathcal I_2$ and, away from $(p,q)=(0,0)$,
\[
 \Psi_2(u,v)=1-\frac{2pq}{p+q}\longrightarrow1
 \qquad\text{as }(p,q)\longrightarrow(0,0).
\]

The remaining points are assigned to one of the two endpoint envelopes.  Put
\begin{align}
 \mathcal E_0
 &:=\R^2\setminus\bigl(
 \operatorname{int}\mathcal I_1\cup
 \operatorname{int}\mathcal I_2\cup
 \operatorname{int}\mathcal I_3\bigr),\notag\\
 \mathcal E_1
 &:=\{(u,v)\in\mathcal E_0:\Phi_1(u,v)\ge\Phi_3(u,v)\},\label{eq:E1}\\
 \mathcal E_3
 &:=\{(u,v)\in\mathcal E_0:\Phi_3(u,v)\ge\Phi_1(u,v)\}.
 \label{eq:E3}
\end{align}
The interiors of $\mathcal I_1,\mathcal I_2,\mathcal I_3,\mathcal E_1,\mathcal E_3$ are pairwise disjoint, and the union of the five regions is $\R^2$.  To see the disjointness of the three interior-contact regions, observe that
\[
 \operatorname{int}\mathcal I_1\subset\{v>1\},\qquad
 \operatorname{int}\mathcal I_3\subset\{u>1\},\qquad
 \operatorname{int}\mathcal I_2\subset\{u<\tfrac12,\ v<\tfrac12\}.
\]
Moreover, $\Phi_1-\Phi_3$ is piecewise affine and no one of its affine branches vanishes identically on an open set; hence its zero set has empty interior, and the interiors of $\mathcal E_1$ and $\mathcal E_3$ are disjoint.  Finally, $\mathcal E_1\cup\mathcal E_3=\mathcal E_0$, so the five regions cover $\R^2$.  Common boundary points have deliberately been included in both adjacent regions; the corresponding formulas below agree there.  The five regions are shown in \cref{fig:norm-regions}.

\begin{figure}[t]
\centering
\includegraphics[width=\textwidth]{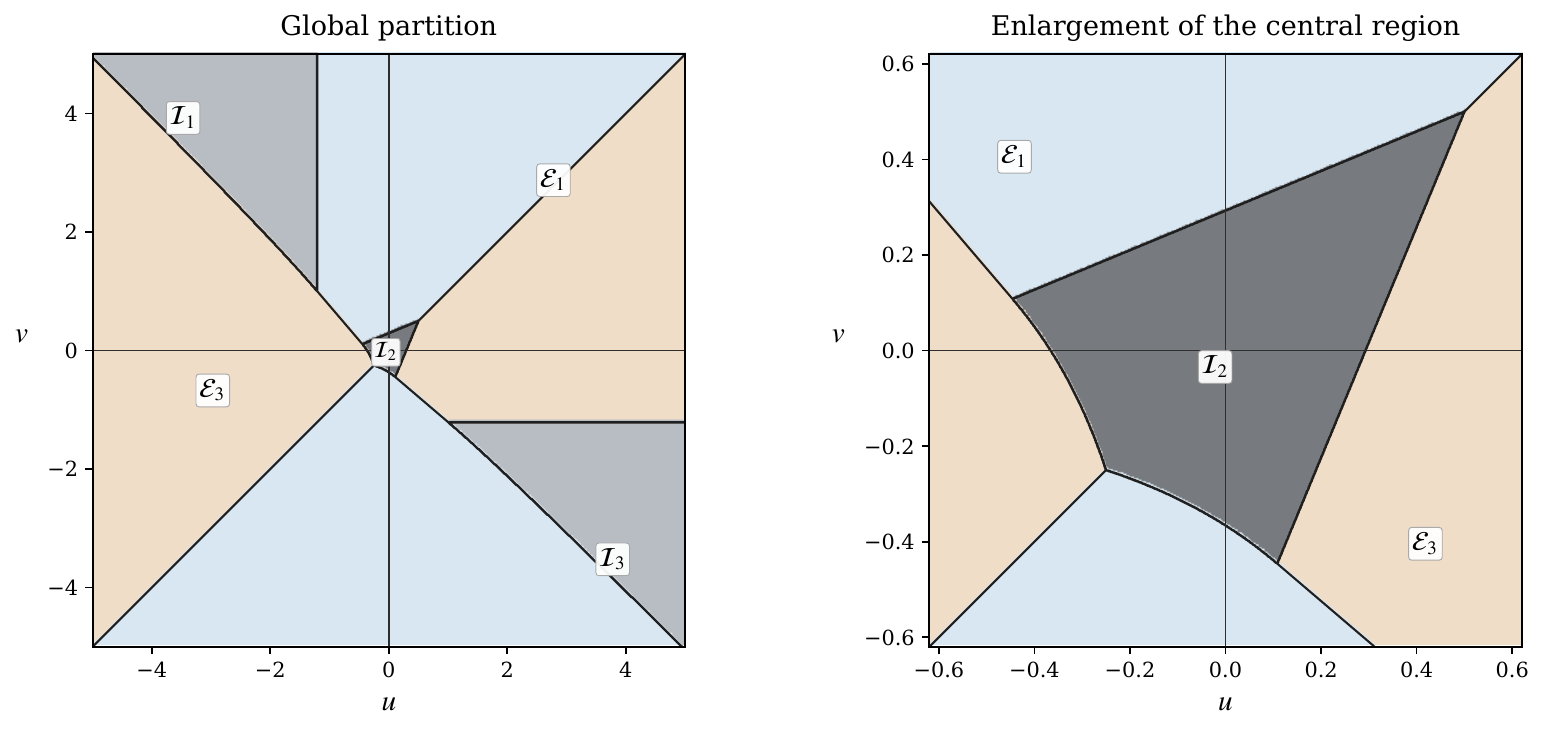}
\caption{The five regions governing the explicit norm formula after the normalization $b=1$.  The regions $\mathcal I_j$ correspond to an interior contact point on $L_j$, while $\mathcal E_1$ and $\mathcal E_3$ correspond to endpoint contact.  To make the two endpoint regions immediately distinguishable, $\mathcal E_1$ is shaded in muted blue and $\mathcal E_3$ in muted ochre; all labels and boundary curves are black.  The right panel magnifies the bounded central region $\mathcal I_2$.}
\label{fig:norm-regions}
\end{figure}

\begin{theorem}[Explicit norm formula]\label{thm:norm}
For every $(u,v)\in\R^2$,
\begin{equation}\label{eq:normalized-norm}
 \nu(u,v)=
 \begin{cases}
 v-\dfrac1{4u},&(u,v)\in\mathcal I_1,\\[2mm]
 \Psi_2(u,v),&(u,v)\in\mathcal I_2,\\[1mm]
 u-\dfrac1{4v},&(u,v)\in\mathcal I_3,\\[2mm]
 \Phi_1(u,v),&(u,v)\in\mathcal E_1,\\[1mm]
 \Phi_3(u,v),&(u,v)\in\mathcal E_3.
 \end{cases}
\end{equation}
Consequently, \eqref{eq:bzero}, \eqref{eq:normalize-b}, and \eqref{eq:normalized-norm} give an explicit formula for $\normH{(a,b,c)}$ for every $(a,b,c)\in\R^3$.
\end{theorem}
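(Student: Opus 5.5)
By homogeneity, $\nu(u,v)=\max_{(x,y)\in L_1\cup L_2\cup L_3}|ux^2+xy+vy^2|$. So the plan is to restrict $P=ux^2+xy+vy^2$ to each radial side, compute the one-variable maxima there, and then compare them region by region.

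\textbf{Step 1: restrictions to the three sides.}
\begin{itemize}
\item On $L_1$, $P(x,1)=ux^2+x+v$ for $x\in[0,\tau]$. The endpoint values are $v$ and $\tau^2u+\tau+v$. The only critical point is $x^*=-1/(2u)$, with value $v-1/(4u)$. It lies in $(0,\tau)$ exactly when $u<-1/(2\tau)=-\eta$, and then it is a maximum.
\item On $L_3$ the same computation holds with the roles of $u$ and $v$ exchanged.
\item On $L_2$, substitute $x=\tfrac{\sqrt2}{2}+t$, $y=\tfrac{\sqrt2}{2}-t$ with $|t|\le \tfrac{\sqrt2}{2}-\tau=\delta\sqrt2$, or parametrize directly. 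The restriction is a quadratic in $t$ with leading coefficient $u+v-1$. Its endpoint values are $\tau^2u+\tau+v$ and $u+\tau+\tau^2v$, which are already included in $\Phi_1$ and $\Phi_3$. Completing the square gives the vertex value $(1-4uv)/(2(1-u-v))$, i.e.\ $\Psi_2$.
\end{itemize}
The vertex lies in the interior of $L_2$ exactly when two linear inequalities hold; these should be $v\le\delta+\tau u$ and $u\le\delta+\tau v$. The vertex is a maximum of $P$ when $u+v<1$.

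Consequently $\nu=\max\{\Phi_1,\Phi_3,\text{stationary values that are admissible}\}$. At this point the identities \eqref{eq:Phi1} and \eqref{eq:Phi3} are just $\max\{|\alpha|,|\beta|\}=|\tfrac{\alpha+\beta}2|+\tfrac{|\alpha-\beta|}2$.

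\textbf{Step 2: which stationary values can win.}
\begin{itemize}
\item \emph{Minima.} A stationary point that is a minimum of $P$ on a side is never needed unless it beats the endpoints in absolute value. I must check that this cannot happen, or that such points land in a region already covered. On $L_1$ with $u>0$, the vertex lies at $x<0$, outside the segment, so this case is harmless.
\item \emph{Condition on $L_1$.} For $u\le-\eta$, the interior value $v-1/(4u)$ dominates both endpoint values in absolute value iff it is at least $|v|$, $|\tau^2u+\tau+v|$, and also at least the competing values from the other sides. Since the maximum on $L_1$ exceeds both endpoints, the only issue is negativity. If $v-1/(4u)\ge |\text{minimum over }\HH|$, the relevant minimum comes from the endpoint values, and $-v\le v-1/(4u)$ gives $v\ge 1/(8u)$. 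The stated condition $v\ge -u+1/(4u)$ should then arise as the condition that the vertex $(0,1)$ or $(\tau,1)$ values, or values from $L_2$ and $L_3$ (for instance $u$ at $(1,0)$), do not exceed it. Indeed, $v-1/(4u)\ge |u|=-u$ gives exactly $v\ge -u+1/(4u)$.
\item \emph{Condition on $L_2$.} The two quadratic inequalities in $\mathcal I_2$ should be, symmetrically, $\Psi_2\ge |u|$ and $\Psi_2\ge|v|$ (the values at $(1,0)$ and $(0,1)$) after clearing the positive denominator $2(1-u-v)$. I would verify this algebraically, e.g.\ $1-4uv\ge -2u(1-u-v)$ is equivalent to $2u^2+6uv-2u-1\le0$ up to sign.
\end{itemize}

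\textbf{Step 3: assembling the formula.} On each $\mathcal I_j$ I then show the stationary value is the global maximum. This is the main obstacle: I must also rule out that a stationary value on another side is larger, and that a very negative endpoint value exceeds it. Here the disjointness facts will be used: $\mathcal I_1\subset\{v\ge1\}$ and $\mathcal I_3\subset\{u\ge1\}$ are far apart, and $\mathcal I_2$ has $u,v<\tfrac12$. On $\mathcal I_1$ the $L_3$ stationary point is inadmissible because $v>0$, and the $L_2$ vertex is a minimum or inadmissible because $u+v\ge1/(4u)+\dots$, which I need to check.

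Outside $\bigcup\operatorname{int}\mathcal I_j$, every admissible interior maximum is dominated by some vertex value, so $\nu=\max\{\Phi_1,\Phi_3\}$. That gives the $\mathcal E_1$/$\mathcal E_3$ split. Finally, continuity of $\nu$ together with the overlap of closed regions handles boundary points, and the $b\neq0$ reduction \eqref{eq:normalize-b} plus \cref{prop:bzero} gives the full formula.
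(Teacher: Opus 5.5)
Your route is the paper's: restrict to $L_1,L_2,L_3$, put the four vertex values into $\Phi_1,\Phi_3$, isolate the three stationary values, and show that a stationary value can strictly beat all vertex values only on $\operatorname{int}\mathcal I_j$. Your observation that $v-\frac1{4u}>|u|$ is the strict second inequality of $\mathcal I_1$ is exactly the paper's argument for $\mathcal E_0$.

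The gap is the converse direction, which you call ``the main obstacle'' and leave as ``which I need to check''. You never show that on $\mathcal I_j$ the stationary value is actually $\nu$, i.e.\ that it dominates every other boundary value in absolute value, including the negative ones. This is not routine bookkeeping. On $\mathcal I_2$, with $\lambda=\Psi_2(u,v)$, the quadratic inequalities give only $\lambda\ge-u$ and $\lambda\ge-v$. Upgrading to $\lambda\ge|u|,|v|$ needs $\lambda-2u=\frac{(1-2u)^2}{2(1-u-v)}\ge0$ and its analogue in $v$. Controlling the possibly negative vertex value $\tau^2u+\tau+v$ needs an upper bound on $\lambda$ that is not among the defining inequalities. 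The paper gets it from $1-\lambda=\frac{(1-2u)(1-2v)}{2(1-u-v)}\ge0$, which yields $\tau^2u+\tau+v\ge\tau-(1+\tau^2)\lambda\ge-\lambda$. You would also have to exclude the $L_1$ and $L_3$ stationary points on $\mathcal I_2$. On $\mathcal I_1$ you must still dominate $|u+\tau+\tau^2v|$ and the $L_2$ values.

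The paper settles all of this at once with completing-the-square certificates valid on the whole of $\HH$, rather than candidate-by-candidate comparison. On $\mathcal I_1$, with $\lambda=v-\frac1{4u}$, one has $P=u\bigl(x+\frac{y}{2u}\bigr)^2+\lambda y^2\le\lambda$, while $v\ge1$ gives $P\ge ux^2\ge u\ge-\lambda$. On $\mathcal I_2$, set $\alpha=\sqrt{\lambda-2u}$ and $\gamma=\sqrt{\lambda-2v}$. The identities above give $\alpha\gamma=1-\lambda$, hence $P=\frac\lambda2(x+y)^2-\frac12(\alpha x-\gamma y)^2\le\lambda$. The lower bound then reduces, by concavity of the three restrictions, to the vertex check just described.

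Your Step 2 also leaves open the local minimum of the $L_2$ restriction when $u+v>1$. It is harmless, because $z_0=\frac{1-2v}{1-2u}>0$ with $u>\frac12$ forces $v>\frac12$, so the restriction is positive; but this has to be argued.

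Two small slips. First, $\frac{\sqrt2}{2}-\tau=\delta$, not $\delta\sqrt2$. Second, the two linear inequalities force $u+v\le1$, so they characterize an admissible \emph{maximum} on $L_2$, not merely a vertex lying on $L_2$.
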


\begin{proof}
Let
\[
 Q_{u,v}(x,y)=ux^2+xy+vy^2.
\]
By homogeneity, $|Q_{u,v}|$ attains its maximum on $L_1\cup L_2\cup L_3$.
On $L_1$ and $L_3$ the restrictions are
\[
 q_1(x)=ux^2+x+v\quad(0\le x\le\tau),
 \qquad
 q_3(y)=vy^2+y+u\quad(0\le y\le\tau).
\]
Their endpoint envelopes are precisely $\Phi_1$ and $\Phi_3$.  The only possible stationary candidates are
\[
 v-\frac1{4u}\quad\text{when }u\le-\eta,
 \qquad
 u-\frac1{4v}\quad\text{when }v\le-\eta.
\]
An interior stationary point occurs when the corresponding inequality is strict; equality gives the endpoint $x=\tau$ on $L_1$ or $y=\tau$ on $L_3$.

For $L_2$, write
\[
 (x,y)=\left(\frac{\sqrt2 z}{1+z},\frac{\sqrt2}{1+z}\right),
 \qquad \tau\le z\le\tau^{-1}.
\]
Then
\begin{equation}\label{eq:r-uv}
 r_{u,v}(z):=Q_{u,v}(x,y)
 =\frac{2(uz^2+z+v)}{(1+z)^2},
 \qquad
 r'_{u,v}(z)=
 \frac{2((2u-1)z+1-2v)}{(1+z)^3}.
\end{equation}
Whenever an interior stationary point exists, it is located at
\[
 z_0=\frac{1-2v}{1-2u},
\]
and direct substitution gives
\begin{equation}\label{eq:stationary-L2}
 r_{u,v}(z_0)=\frac{1-4uv}{2(1-u-v)}.
\end{equation}
The endpoint values of $r_{u,v}$ are already contained in $\Phi_1$ and $\Phi_3$.  Thus the only candidates not contained in the endpoint envelopes are the three stationary values just displayed.

We first prove the formula on $\mathcal I_1$.  Put
\[
 \lambda=v-\frac1{4u}.
\]
The defining inequalities of $\mathcal I_1$ imply $u<0$, $\lambda\ge-u$, and $v\ge1$.  Completing the square gives
\begin{equation}\label{eq:I1-square}
 Q_{u,v}(x,y)
 =u\left(x+\frac{y}{2u}\right)^2+\lambda y^2
 \le\lambda.
\end{equation}
On the other hand, because $x,y\ge0$ and $v\ge1$,
\[
 Q_{u,v}(x,y)\ge ux^2\ge u\ge-\lambda.
\]
Equality in the upper estimate is attained at
$(-1/(2u),1)\in L_1$.  Hence $\nu(u,v)=\lambda$ on $\mathcal I_1$.  The proof on $\mathcal I_3$ is symmetric.

We next consider $\mathcal I_2$.  At $(u,v)=(1/2,1/2)$ one has
\[
 Q_{u,v}(x,y)=\frac12(x+y)^2,
\]
so the norm is $1$.  Assume therefore that $(u,v)\ne(1/2,1/2)$ and put
\[
 \lambda=\frac{1-4uv}{2(1-u-v)}.
\]
The two linear inequalities in \eqref{eq:I2} imply $u,v<1/2$; indeed, combining them yields $u,v\le1/2$, and equality in either one forces $u=v=1/2$.  Hence $1-u-v>0$.

The two quadratic inequalities in \eqref{eq:I2} are equivalent to
\begin{equation}\label{eq:lambda-lower}
 \lambda+u\ge0,
 \qquad
 \lambda+v\ge0.
\end{equation}
They also imply that $\lambda>0$.  Indeed, if $\lambda\le0$, then $u,v\ge-\lambda\ge0$, whereas
\[
 1-4uv=2(1-u-v)\lambda\le0.
\]
Thus $uv\ge1/4$, contradicting $u,v<1/2$.  Moreover,
\begin{align}
 1-\lambda
 &=\frac{(1-2u)(1-2v)}{2(1-u-v)}\ge0,\label{eq:lambda-one}\\
 \lambda-2u
 &=\frac{(1-2u)^2}{2(1-u-v)}\ge0,
 \qquad
 \lambda-2v
 =\frac{(1-2v)^2}{2(1-u-v)}\ge0.\label{eq:lambda-two-u}
\end{align}
Finally, the two linear inequalities are equivalent to
\[
 \tau\le z_0=\frac{1-2v}{1-2u}\le\tau^{-1}.
\]
Thus $r_{u,v}$ attains its maximum at $z_0\in[\tau,\tau^{-1}]$, and this maximum is $\lambda$ by \eqref{eq:stationary-L2}.  On the boundary of $\mathcal I_2$, the point $z_0$ may coincide with one of the two endpoints.

For a direct global upper estimate, set
\[
 \alpha=\sqrt{\lambda-2u},
 \qquad
 \gamma=\sqrt{\lambda-2v}.
\]
Equations \eqref{eq:lambda-one}--\eqref{eq:lambda-two-u} give
$\alpha\gamma=1-\lambda$, and therefore
\begin{equation}\label{eq:I2-identity}
 Q_{u,v}(x,y)
 =\frac{\lambda}{2}(x+y)^2
  -\frac12(\alpha x-\gamma y)^2
 \le\lambda
 \qquad ((x,y)\in\HH).
\end{equation}
It remains to verify the lower estimate.  On $L_1$, the minimum of $q_1$ is attained at an endpoint: if $u\ge0$, then $q_1$ is increasing, whereas if $u<0$, it is concave.  The same argument applies on $L_3$.  On $L_2$, the coefficient of $x^2$ in the restriction is $u+v-1<0$, so that restriction is concave and its minimum is again attained at an endpoint.  Hence it is enough to inspect the four radial vertices.  By \eqref{eq:lambda-lower}, $u,v\ge-\lambda$, and
\[
 \tau^2u+\tau+v
 \ge \tau-(1+\tau^2)\lambda\ge-\lambda,
\]
where the last inequality follows from $0<\lambda\le1$; the fourth vertex is symmetric.  Thus $Q_{u,v}\ge-\lambda$ on $\HH$.  Since equality in the upper estimate occurs at the stationary point on $L_2$, we obtain $\nu(u,v)=\lambda$ on $\mathcal I_2$.

It remains to justify that no other stationary value can strictly dominate the endpoint envelopes outside the interiors of $\mathcal I_1,\mathcal I_2,\mathcal I_3$.  For $q_1$, an interior stationary point can occur only when $u\le-\eta$.  If its value is nonpositive, its absolute value is no larger than the absolute value at an endpoint because the stationary point is the maximum of the concave quadratic $q_1$.  If it is positive and strictly larger than all endpoint values, then in particular
\[
 v-\frac1{4u}>|u|=-u,
\]
which is exactly the strict form of the second inequality defining $\mathcal I_1$.  The argument for $q_3$ is symmetric.

For $r_{u,v}$, an interior stationary point that strictly dominates the endpoint values cannot be a local minimum.  Indeed, if $u>1/2$, then the existence of a positive stationary ratio forces $v>1/2$, and $r_{u,v}$ is positive; a local minimum cannot dominate its endpoints.  Thus a strictly dominant stationary point must satisfy $u,v<1/2$ and is a positive local maximum.  The condition $z_0\in(\tau,\tau^{-1})$ gives the strict forms of the two linear inequalities in \eqref{eq:I2}, while strict domination over the axial endpoint values gives
$\lambda>|u|,|v|$, and hence the strict forms of the two quadratic inequalities in \eqref{eq:I2}.  Therefore such a point belongs to $\operatorname{int}\mathcal I_2$.

Consequently, on $\mathcal E_0$ the norm is simply
\[
 \max\{\Phi_1(u,v),\Phi_3(u,v)\}.
\]
The definitions of $\mathcal E_1$ and $\mathcal E_3$ now yield the last two branches of \eqref{eq:normalized-norm}.  The formulas coincide on every common boundary, which completes the proof.
\end{proof}

\begin{corollary}[Explicit formula in the original coefficients]\label{cor:norm-original-coefficients}
Define
\begin{align*}
 \Theta_1(a,b,c)&:=\max\bigl\{|c|,|\tau^2a+\tau b+c|\bigr\},\\
 \Theta_3(a,b,c)&:=\max\bigl\{|a|,|a+\tau b+\tau^2c|\bigr\},
\end{align*}
and
\begin{align*}
 \Xi_1(a,b,c)&:=\operatorname{sgn}(b)\left(c-\frac{b^2}{4a}\right),\\
 \Xi_2(a,b,c)&:=\operatorname{sgn}(b)\,\frac{4ac-b^2}{2(a-b+c)},\\
 \Xi_3(a,b,c)&:=\operatorname{sgn}(b)\left(a-\frac{b^2}{4c}\right),
\end{align*}
with the continuous extension \(\Xi_2(b/2,b,b/2)=|b|\).
Then, for every \((a,b,c)\in\R^3\),
\[
 \normH{(a,b,c)}=
 \begin{cases}
 \max\bigl\{|a|,|c|,|\tau^2a+c|,|a+\tau^2c|\bigr\},& b=0,\\[1.2ex]
 \Xi_1(a,b,c),& b\ne0\ \text{and}\ \left(\dfrac ab,\dfrac cb\right)\in\mathcal I_1,\\[2ex]
 \Xi_2(a,b,c),& b\ne0\ \text{and}\ \left(\dfrac ab,\dfrac cb\right)\in\mathcal I_2,\\[2ex]
 \Xi_3(a,b,c),& b\ne0\ \text{and}\ \left(\dfrac ab,\dfrac cb\right)\in\mathcal I_3,\\[2ex]
 \Theta_1(a,b,c),& b\ne0\ \text{and}\ \left(\dfrac ab,\dfrac cb\right)\in\mathcal E_1,\\[2ex]
 \Theta_3(a,b,c),& b\ne0\ \text{and}\ \left(\dfrac ab,\dfrac cb\right)\in\mathcal E_3.
 \end{cases}
\]
In particular, the two endpoint branches are more naturally written as the maxima \(\Theta_1\) and \(\Theta_3\), rather than in the equivalent half-sum form \eqref{eq:Phi1}--\eqref{eq:Phi3}.
\end{corollary}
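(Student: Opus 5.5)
The corollary follows by translating \cref{thm:norm} back to the original coefficients. The case \(b=0\) is exactly \cref{prop:bzero}. For \(b\ne0\), I would start from \eqref{eq:normalize-b}, \(\normH{(a,b,c)}=|b|\,\nu(a/b,c/b)\), and evaluate \(\nu\) at \((u,v)=(a/b,c/b)\) branch by branch using \eqref{eq:normalized-norm}. In each branch the task is to check that \(|b|\) times the normalized expression equals the stated \(\Xi_j\) or \(\Theta_j\). Throughout I use \(|b|=\operatorname{sgn}(b)\,b\).

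For the interior branches this is direct algebra.

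On \(\mathcal I_1\) we have \(u=a/b\ne0\), since \(u\le-\eta\). Then
\[
|b|\Bigl(\tfrac cb-\tfrac{b}{4a}\Bigr)=\operatorname{sgn}(b)\Bigl(c-\tfrac{b^2}{4a}\Bigr)=\Xi_1.
\]
The branch \(\mathcal I_3\) is the same computation with \(a\) and \(c\) interchanged, which gives \(\Xi_3\).

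On \(\mathcal I_2\), away from \((\tfrac12,\tfrac12)\), I would multiply the numerator and denominator of \(\Psi_2\) by \(b^2\):
\[
\frac{1-4ac/b^2}{2(1-a/b-c/b)}=\frac{b^2-4ac}{2b(b-a-c)}=\frac{1}{b}\cdot\frac{4ac-b^2}{2(a-b+c)}.
\]
Multiplying by \(|b|\) then gives \(\Xi_2\). The denominator is nonzero because the proof of \cref{thm:norm} showed \(1-u-v>0\) on \(\mathcal I_2\setminus\{(\tfrac12,\tfrac12)\}\).

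The exceptional point \((u,v)=(\tfrac12,\tfrac12)\) corresponds exactly to \((a,b,c)=(b/2,b,b/2)\). There \(\nu=1\), so the norm is \(|b|\), which matches the stated continuous extension of \(\Xi_2\).

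For the endpoint branches I would use the max-forms of \(\Phi_1\) and \(\Phi_3\) in \eqref{eq:Phi1}–\eqref{eq:Phi3} rather than the half-sum forms. Since \(|b|\max\{|s|,|t|\}=\max\{|bs|,|bt|\}\), we get
\[
|b|\,\Phi_1(a/b,c/b)=\max\{|c|,\,|\tau^2a+\tau b+c|\}=\Theta_1,
\]
and similarly \(|b|\,\Phi_3=\Theta_3\). The remark on the half-sum form is then just the identity \(\max\{|s|,|t|\}=|(s+t)/2|+|s-t|/2\), which is how \eqref{eq:Phi1} was obtained.

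The only point needing care is well-definedness on overlapping region boundaries and at the singular point of \(\Xi_2\). Both are inherited from the theorem: the branches of \eqref{eq:normalized-norm} agree on common boundaries, and scaling by \(|b|\) preserves that agreement. I expect no real obstacle beyond tracking signs, since the factor \(\operatorname{sgn}(b)\) arises precisely from \(|b|/b\).
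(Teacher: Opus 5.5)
Your proposal is correct and follows the paper's own proof: you reduce to \cref{prop:bzero} when $b=0$, and otherwise rescale each branch of \cref{thm:norm} by $|b|=\operatorname{sgn}(b)\,b$ via \eqref{eq:normalize-b}, noting that the denominator of $\Psi_2$ vanishes on $\mathcal I_2$ only at $(\tfrac12,\tfrac12)$, i.e.\ at $(b/2,b,b/2)$, where the norm is $|b|$. Your checks that $a\ne0$ on $\mathcal I_1$ and that $|b|\,\Phi_j=\Theta_j$ via the max-form are the same verifications the paper performs.
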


\begin{proof}
The case $b=0$ is precisely \cref{prop:bzero}.  Assume that $b\ne0$ and put
\[
 u=\frac ab,\qquad v=\frac cb.
\]
By \eqref{eq:normalize-b} and \cref{thm:norm},
\[
 \normH{(a,b,c)}=|b|\,\nu(u,v).
\]
On $\mathcal I_1$,
\[
 |b|\left(v-\frac1{4u}\right)
 =\operatorname{sgn}(b)\left(c-\frac{b^2}{4a}\right)
 =\Xi_1(a,b,c).
\]
Similarly, the branch on $\mathcal I_3$ gives $\Xi_3(a,b,c)$.  On $\mathcal I_2$,
\[
 |b|\frac{1-4uv}{2(1-u-v)}
 =\operatorname{sgn}(b)\frac{4ac-b^2}{2(a-b+c)}
 =\Xi_2(a,b,c).
\]
The denominator in the last expression can vanish on $\mathcal I_2$ only when $u+v=1$.  Since $u,v\le1/2$ on $\mathcal I_2$, this happens only at $u=v=1/2$, or equivalently at $a=c=b/2$; there the continuous extension equals $|b|$.  Finally,
\[
 |b|\Phi_1(u,v)=\Theta_1(a,b,c),
 \qquad
 |b|\Phi_3(u,v)=\Theta_3(a,b,c).
\]
This proves the stated formula.
\end{proof}

\begin{remark}\label{rem:norm}
The regional formula is invariant under the symmetry $(u,v)\mapsto(v,u)$, which interchanges $\mathcal I_1$ with $\mathcal I_3$ and $\mathcal E_1$ with $\mathcal E_3$, while leaving $\mathcal I_2$ invariant.  Geometrically, this is the symmetry $x\leftrightarrow y$ of the body $\HH$.
\end{remark}

\section{Projection of the unit ball and parametrization of the unit sphere}\label{sec:projection}

For fixed \((a,c)\), the coefficient \(b\) occurs linearly.  This makes the vertical sections over the \(ac\)-plane more transparent than the direct normalization \(b=1\).

Suppose first that \((a,c)\in[-1,1]^2\).  The upper inequality
\[
 ax^2+bxy+cy^2\le1
\]
can be written, when \(xy>0\), as
\[
 b\le \frac{1-ax^2-cy^2}{xy}.
\]
Consequently, the largest possible value of \(b\) compatible with the upper inequality is
\begin{equation}\label{eq:defU}
 U(a,c):=\inf_{(x,y)\in L_1\cup L_2\cup L_3,\ xy>0}
 \frac{1-ax^2-cy^2}{xy}.
\end{equation}
On the three sides this becomes
\begin{align}
 U_1(a,c)&=\inf_{0<x\le\tau}\left(\frac{1-c}{x}-ax\right),\label{eq:U1}\\
 U_3(a,c)&=\inf_{0<y\le\tau}\left(\frac{1-a}{y}-cy\right),\label{eq:U3}
\end{align}
and, after setting \(z=x/y\) on \(L_2\),
\begin{equation}\label{eq:U2}
 U_2(a,c)=\inf_{\tau\le z\le\tau^{-1}}
 \left[1+\frac12\left((1-2a)z+\frac{1-2c}{z}\right)\right].
\end{equation}
Thus \(U=\min\{U_1,U_2,U_3\}\).

We now introduce the five regions that determine which contact point is active:
\begin{align*}
 \mathcal A&=\{(a,c)\in[-1,1]^2:a\le0,\ 1+\tau^2a\le c\le1\},\\
 \mathcal B&=\{(a,c)\in[-1,1]^2:a\le c,\ \tau+\tau^2a\le c\le1+\tau^2a\},\\
 \mathcal C&=\{(a,c)\in[-1,1]^2:c\le\tau+\tau^2a,\ a\le\tau+\tau^2c\},\\
 \mathcal D&=\{(a,c)\in[-1,1]^2:c\le a,\ \tau+\tau^2c\le a\le1+\tau^2c\},\\
 \mathcal E&=\{(a,c)\in[-1,1]^2:c\le0,\ 1+\tau^2c\le a\le1\}.
\end{align*}
Their interiors are pairwise disjoint and
\begin{equation}\label{eq:partition}
 [-1,1]^2=\mathcal A\cup\mathcal B\cup\mathcal C\cup\mathcal D\cup\mathcal E.
\end{equation}
The overlaps consist only of boundary pieces.  The partition is shown in \cref{fig:projection-regions}.

\begin{figure}[t]
\centering
\begin{tikzpicture}[scale=3.25]
  \pgfmathsetmacro{\ttau}{sqrt(2)-1}
  \pgfmathsetmacro{\ttwo}{2*(sqrt(2)-1)}
  \pgfmathsetmacro{\trzero}{(sqrt(2)-1)-(3-2*sqrt(2))}
  \fill[blue!25] (-1,\ttwo)--(-1,1)--(0,1)--cycle;
  \fill[teal!24] (-1,\trzero)--(-1,\ttwo)--(0,1)--(1,1)--(.5,.5)--cycle;
  \fill[orange!24] (-1,-1)--(-1,\trzero)--(.5,.5)--(\trzero,-1)--cycle;
  \fill[violet!22] (\trzero,-1)--(\ttwo,-1)--(1,0)--(1,1)--(.5,.5)--cycle;
  \fill[red!22] (\ttwo,-1)--(1,-1)--(1,0)--cycle;
  \draw[very thick] (-1,-1) rectangle (1,1);
  \draw[thick] (-1,\ttwo)--(0,1);
  \draw[thick] (-1,\trzero)--(.5,.5);
  \draw[thick] (.5,.5)--(\trzero,-1);
  \draw[thick] (\ttwo,-1)--(1,0);
  \draw[thick] (.5,.5)--(1,1);
  \node at (-.78,.93) {$\mathcal A$};
  \node at (-.1,.70) {$\mathcal B$};
  \node at (-.38,-.35) {$\mathcal C$};
  \node at (.70,.08) {$\mathcal D$};
  \node at (.94,-.75) {$\mathcal E$};
  \draw[->] (-1.13,0)--(1.13,0) node[right] {$a$};
  \draw[->] (0,-1.13)--(0,1.13) node[above] {$c$};
  \node[align=center,font=\scriptsize] at (-.72,.68) {interior\\of $L_1$};
  \node[align=center,font=\scriptsize] at (-.02,.91) {$(\tau,1)$};
  \node[align=center,font=\scriptsize] at (-.32,-.62) {interior of $L_2$};
  \node[align=center,font=\scriptsize] at (.82,.38) {$(1,\tau)$};
  \node[align=center,font=\scriptsize] at (.68,-.86) {interior\\of $L_3$};
\end{tikzpicture}
\caption{The five regions in the projected square.  They record whether the upper contact occurs at an interior point of \(L_1\), at \((\tau,1)\), at an interior point of \(L_2\), at \((1,\tau)\), or at an interior point of \(L_3\), respectively.}
\label{fig:projection-regions}
\end{figure}

Define \(F:[-1,1]^2\to\R\) by
\begin{equation}\label{eq:F}
 F(a,c)=
 \begin{cases}
 2\sqrt{-a(1-c)},&(a,c)\in\mathcal A,\\[.8ex]
 \dfrac{1-c}{\tau}-\tau a,&(a,c)\in\mathcal B,\\[1.2ex]
 1+\sqrt{(1-2a)(1-2c)},&(a,c)\in\mathcal C,\\[.8ex]
 \dfrac{1-a}{\tau}-\tau c,&(a,c)\in\mathcal D,\\[1.2ex]
 2\sqrt{-c(1-a)},&(a,c)\in\mathcal E.
 \end{cases}
\end{equation}
The formulas agree on all overlaps.  For example, on \(c=1+\tau^2a\),
\[
 2\sqrt{-a(1-c)}=-2\tau a=\frac{1-c}{\tau}-\tau a,
\]
and on \(c=\tau+\tau^2a\),
\[
 \frac{1-c}{\tau}-\tau a
 =1+\sqrt{(1-2a)(1-2c)}.
\]
The remaining identities follow by symmetry.

\begin{theorem}[Projection and upper endpoint]\label{thm:projection}
The projection of both \(\BH\) and \(\SH\) onto the \(ac\)-plane is the square
\[
 \pi_{ac}(\BH)=\pi_{ac}(\SH)=[-1,1]^2.
\]
Moreover, \(F(a,c)=U(a,c)\) for every \((a,c)\in[-1,1]^2\).  Equivalently, \(F(a,c)\) is the largest coefficient \(b\) for which
\[
 ax^2+bxy+cy^2\le1\qquad ((x,y)\in\HH).
\]
For each \((a,c)\in[-1,1]^2\), the polynomial \((a,F(a,c),c)\) belongs to \(\SH\).
\end{theorem}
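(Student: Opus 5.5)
The plan has three parts: show the projection is contained in the square, compute $U$ on each of the five regions, and then check that $(a,F(a,c),c)$ has norm exactly $1$.

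\emph{Containment and reduction to computing $U$.} Since $(1,0)$ and $(0,1)$ lie in $\HH$, any $P=(a,b,c)\in\BH$ satisfies $|a|=|P(1,0)|\le1$ and $|c|=|P(0,1)|\le1$. Hence $\pi_{ac}(\BH)\subset[-1,1]^2$. For the reverse inclusion it suffices to prove two things: $F=U$, and $(a,F(a,c),c)\in\SH$. Then every $(a,c)\in[-1,1]^2$ is the projection of a point of $\SH\subset\BH$. By the reasoning preceding \eqref{eq:defU}, using homogeneity and the fact that the norm is attained on $L_1\cup L_2\cup L_3$, the largest admissible $b$ in the one-sided inequality is $U=\min\{U_1,U_2,U_3\}$. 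The points with $xy=0$ contribute only the constraints $a,c\le1$, which already hold.

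\emph{Computing $U_1,U_2,U_3$.} Each is a one-variable minimization. For \eqref{eq:U1}, the function $g(x)=(1-c)/x-ax$ has the following behaviour. If $a\ge0$, it is nonincreasing, so the infimum is $g(\tau)=(1-c)/\tau-\tau a$. If $a<0$, it is convex with critical point $x^*=\sqrt{(1-c)/(-a)}$, which gives $2\sqrt{-a(1-c)}$ when $x^*\le\tau$, i.e.\ when $c\ge1+\tau^2a$; otherwise the value is $g(\tau)$. The function $U_3$ is symmetric. For \eqref{eq:U2}, write $h(z)=1+\tfrac12((1-2a)z+(1-2c)/z)$ with $1-2a,1-2c\ge -1$. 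If both coefficients are positive, $h$ is convex with minimizer $z^*=\sqrt{(1-2c)/(1-2a)}$, giving $1+\sqrt{(1-2a)(1-2c)}$ when $z^*\in[\tau,\tau^{-1}]$. The condition $z^*\ge\tau$ reads $1-2c\ge\tau^2(1-2a)$, i.e.\ $c\le\tau+\tau^2a$, using $(1-\tau^2)/2=\tau$; symmetrically $z^*\le\tau^{-1}$ reads $a\le\tau+\tau^2c$. Otherwise the minimum sits at an endpoint $z=\tau$ or $z=\tau^{-1}$, where $h$ equals $g(\tau)$, respectively its $L_3$ analogue. This is consistent, since these are the vertices $(\tau,1)$ and $(1,\tau)$.

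\emph{Comparing the candidates.} This is the main obstacle: showing that $\min\{U_1,U_2,U_3\}$ equals the branch of $F$ on each of $\mathcal A,\dots,\mathcal E$. Every $U_j$ is bounded above by its value at the vertices it contains. Hence $U\le\min\{V_1,V_3\}$, where $V_1=(1-c)/\tau-\tau a$ and $V_3=(1-a)/\tau-\tau c$ are the vertex values, and $V_1\le V_3$ exactly when $a\le c$. I would go region by region.
\begin{itemize}
\item On $\mathcal B$, argue that the interior minima of $U_1$ and $U_2$ are unavailable (the point is outside their active ranges), so $U_1=U_2=V_1\le V_3\le U_3$.
\item On $\mathcal A$, use $U_1=2\sqrt{-a(1-c)}\le V_1$. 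Also $c\ge1+\tau^2a>\tau+\tau^2a$ puts us outside the interior range of $U_2$, so $U_2=V_1$. Finally, since $a\le0\le c$, we have $U_3\ge V_3\ge V_1$.
\item On $\mathcal C$, the interior $L_2$ minimum lies below both vertex values by convexity. One must also check $U_1,U_3\ge U_2$ there. For this I would use that the $L_1$ interior minimum requires $c\ge1+\tau^2a$, which fails on $\mathcal C$, so $U_1=V_1\ge U_2$. The inequality $U_3\ge U_2$ is symmetric.
\item $\mathcal D$ and $\mathcal E$ follow by the symmetry of \cref{rem:norm}.
\end{itemize}
In each region I would need care with the sign cases for $a$ (e.g.\ $a\ge0$ versus $a<0$ within $\mathcal B$), and with the case where $1-2a$ or $1-2c$ is nonpositive, in which $h$ is monotone.

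\emph{Norm exactly $1$.} Finally I verify $(a,F,c)\in\SH$. By construction $P=ax^2+Fxy+cy^2\le1$ on $\HH$, with equality at the minimizing contact point. For the lower bound $P\ge-1$, note $F\ge0$: in $\mathcal C$ it is at least $1$, the square-root branches are nonnegative, and the vertex branches are nonnegative on their regions. So on $\HH$ we have $P\ge ax^2+cy^2$. Then \cref{prop:bzero}, or a direct check of the vertices of $Q$, shows $ax^2+cy^2\ge -1$ whenever it is negative. Indeed, if $a,c\le0$, the vertex values are $a$, $c$, $\tau^2a+c$ and $a+\tau^2c$, and the last two could drop below $-1$. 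In that case I would instead use the term $Fxy$ with $F\ge1$ on $\mathcal C$ (which contains the negative quadrant) and check $P\ge-1$ at the vertices $(\tau,1)$ and $(1,\tau)$. By the concavity argument from the proof of \cref{thm:norm}, the minimum of $P$ on each side is attained at an endpoint, which closes this case.
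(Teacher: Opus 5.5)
Your computation of $U=\min\{U_1,U_2,U_3\}$ region by region follows the paper's proof closely and is fine. The final step, however, rests on a false claim. You assert $F\ge0$ on the grounds that ``the vertex branches are nonnegative on their regions,'' but this fails on $\mathcal B$, and symmetrically on $\mathcal D$. There $F(a,c)=(1-c)/\tau-\tau a$, which is negative whenever $c>1-\tau^2a$. For instance, $(1,1)\in\mathcal B$ and $F(1,1)=-\tau$. This gives exactly the polynomial $x^2-\tau xy+y^2$ (the point $p_3$) that witnesses the non-absoluteness of the norm. On the open set of such $(a,c)$ the inequality $P\ge ax^2+cy^2$ is unavailable. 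So your argument does not prove $P\ge-1$ there, and hence does not prove $(a,F(a,c),c)\in\SH$ for those points.

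The repair is short. If $(a,c)\in\mathcal B$ and $F(a,c)<0$, then $1-c<\tau^2a$ forces $a>0$ and $c>1-\tau^2a\ge2\tau>0$. The paper writes $F=-d/\tau$ with $d=c-1+\tau^2a\le2\tau^2a$, and uses $c\ge\tau^2a$ to get $F^2\le4ac$, so $P$ is positive semidefinite. Alternatively, $|F|\le\tau a\le\tau$ and $xy\le1/2$ on $\HH$ give $P\ge-\tau/2$ directly.

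Away from this set, your sign-based lower bound is correct and somewhat simpler than the paper's use of \eqref{eq:Cidentity} on $\mathcal C$. When $F\ge0$ and $a,c$ are not both negative, you have $P\ge ax^2+cy^2\ge\min\{a,c,0\}\ge-1$. The quadrant $[-1,0]^2$ lies in $\mathcal C$, where your vertex check combined with concavity of the side restrictions goes through.

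You also tacitly use the covering \eqref{eq:partition} of the square by the five regions. The paper obtains it from the exhaustiveness of the minimization alternatives; it is easy, but your region-by-region comparison does not establish it and you should state it.
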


\begin{proof}
The points \((1,0)\) and \((0,1)\) belong to \(\HH\).  Hence
\[
 \normH{(a,b,c)}\ge\max\{|a|,|c|\},
\]
which proves that both projections are contained in \([-1,1]^2\).

We compute the upper endpoint.  If \(a<0\) and \(1+\tau^2a<c<1\), the minimum in \eqref{eq:U1} is attained at the interior point
\[
 x_0=\sqrt{\frac{1-c}{-a}}
\]
and equals \(2\sqrt{-a(1-c)}\).  The same formula holds on the boundary of \(\mathcal A\) by continuity: at \(x=\tau\) when \(c=1+\tau^2a\), and as \(x\downarrow0\) when \(c=1\).  In all remaining cases the minimum is attained at \(x=\tau\) and equals
\[
 u_{\tau,1}(a,c):=\frac{1-c}{\tau}-\tau a.
\]
The calculation of \(U_3\) is symmetric.

For \(U_2\), the stationary ratio is
\[
 z_0=\sqrt{\frac{1-2c}{1-2a}}.
\]
The inequalities defining \(\mathcal C\) are equivalent to
\[
 1-2a\ge0,\qquad 1-2c\ge0,\qquad
 \tau^2\le\frac{1-2c}{1-2a}\le\tau^{-2},
\]
with the usual limiting interpretation at \((a,c)=(1/2,1/2)\).  Thus, on \(\mathcal C\), the minimum in \eqref{eq:U2} is attained at \(z_0\) and equals
\[
 1+\sqrt{(1-2a)(1-2c)}.
\]
At the endpoints of the interval for \(z\) one obtains
\[
 u_{\tau,1}(a,c),\qquad
 u_{1,\tau}(a,c):=\frac{1-a}{\tau}-\tau c,
\]
and
\begin{equation}\label{eq:endpointdifference}
 u_{\tau,1}(a,c)-u_{1,\tau}(a,c)=2(a-c).
\end{equation}
On \(\mathcal B\), the inequality \(c\le1+\tau^2a\) makes the minimum in \eqref{eq:U1} occur at \(x=\tau\), while \(c\ge\tau+\tau^2a\) is exactly the condition that the minimum in \eqref{eq:U2} occur at the endpoint \(z=\tau\), with the usual boundary convention.  Finally, \(a\le c\) and \eqref{eq:endpointdifference} select \((\tau,1)\) rather than \((1,\tau)\).  The region \(\mathcal D\) is symmetric.  The regions \(\mathcal A\) and \(\mathcal E\) correspond to the two remaining interior minima.  These alternatives are exhaustive: the infimum defining $U_1$ is obtained either at its stationary point or at $x=\tau$ (with the limiting case $x\downarrow0$ when $c=1$), the same dichotomy holds symmetrically for $U_3$, and the infimum defining $U_2$ is obtained either at its stationary point or at one of the two endpoints of $[\tau,\tau^{-1}]$.  Together with \eqref{eq:endpointdifference}, these possibilities give exactly the five regions in \eqref{eq:partition}.  This proves \(U=F\) and also \eqref{eq:partition}.

It remains to verify the lower inequality for \(b=F(a,c)\).  We do this region by region.

If \((a,c)\in\mathcal A\), then \(c\ge1-\tau^2=2\tau>0\) and \(F(a,c)\ge0\).  Therefore
\[
 ax^2+F(a,c)xy+cy^2\ge ax^2\ge-1
\]
throughout \(\HH\).  The case \(\mathcal E\) is symmetric.

Let \((a,c)\in\mathcal B\), and put \(b=F(a,c)\).  Here
\(c\ge\tau-\tau^2>0\).  If \(b\ge0\), then again
\(P(x,y)\ge ax^2\ge-1\).  If \(b<0\), set
\[
 d=c-1+\tau^2a>0.
\]
Then \(b=-d/\tau\).  The upper boundary of \(\mathcal B\) gives
\(d\le2\tau^2a\), so \(a>0\), whereas the lower boundary gives
\(c\ge\tau+\tau^2a>\tau^2a\).  Consequently,
\[
 b^2=\frac{d^2}{\tau^2}\le4\tau^2a^2\le4ac.
\]
Thus the quadratic form is positive semidefinite, and in particular it is bounded below by \(0\).  The case \(\mathcal D\) follows by symmetry.

Finally, let \((a,c)\in\mathcal C\), put
\[
 \alpha=\sqrt{1-2a},\qquad \gamma=\sqrt{1-2c},
\]
and set \(b=1+\alpha\gamma\).  Then
\begin{equation}\label{eq:Cidentity}
 P(x,y)=\frac12(x+y)^2-\frac12(\alpha x-\gamma y)^2.
\end{equation}
The inequalities defining \(\mathcal C\) imply
\(\tau\alpha\le\gamma\le\tau^{-1}\alpha\), and also \(a+c\le1\).  On \(L_1\), the restriction is increasing when \(a\ge0\), and concave when \(a<0\); hence its minimum occurs at an endpoint.  The same holds on \(L_3\).  On \(L_2\), the leading coefficient is
\(a-b+c\le0\), so the restriction is concave and its minimum also occurs at an endpoint.  At the two non-axial endpoints, \eqref{eq:Cidentity} gives
\begin{align*}
 P(\tau,1)&=1-\frac12(\alpha\tau-\gamma)^2,\\
 P(1,\tau)&=1-\frac12(\alpha-\gamma\tau)^2.
\end{align*}
Indeed, $\tau\alpha\le\gamma$ and $\tau\gamma\le\alpha$, so
$|\alpha\tau-\gamma|\le\gamma\le\sqrt3$ and
$|\alpha-\gamma\tau|\le\alpha\le\sqrt3$.  Hence both squared terms are at most $3$, and therefore both endpoint values are at least $-1/2$.  The axial endpoint values satisfy \(a,c\ge-1\).  Therefore \(P\ge-1\) on the radial boundary and hence on all of \(\HH\).

We have shown in every case that \(\normH{(a,F(a,c),c)}\le1\).  Equality holds at the contact point used to obtain \(F\): at an interior point of \(L_1\) in \(\mathcal A\), at \((\tau,1)\) in \(\mathcal B\), at an interior point of \(L_2\) in \(\mathcal C\), at \((1,\tau)\) in \(\mathcal D\), and at an interior point of \(L_3\) in \(\mathcal E\).  The limiting boundary cases are immediate.  Hence \((a,F(a,c),c)\in\SH\), proving the reverse inclusion of the projected square and completing the proof.
\end{proof}

Define
\begin{equation}\label{eq:G}
 G(a,c):=-F(-a,-c).
\end{equation}
The lower inequality \(ax^2+bxy+cy^2\ge-1\) is equivalent to the upper inequality for \((-a,-b,-c)\).  We therefore obtain the complete vertical description of the ball.

\begin{corollary}[Vertical sections]\label{cor:ball}
For every \((a,b,c)\in\R^3\),
\[
 (a,b,c)\in\BH
 \quad\Longleftrightarrow\quad
 (a,c)\in[-1,1]^2\ \text{ and }\ G(a,c)\le b\le F(a,c).
\]
In particular, \(F\) is concave, \(G\) is convex, and both are continuous.
\end{corollary}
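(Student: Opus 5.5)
The plan is to read the corollary directly off \cref{thm:projection}, applied once to \((a,b,c)\) and once to \((-a,-b,-c)\).

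By definition, \((a,b,c)\in\BH\) if and only if the two inequalities
\[
 ax^2+bxy+cy^2\le1
 \qquad\text{and}\qquad
 -(ax^2+bxy+cy^2)\le1
\]
hold for all \((x,y)\in\HH\).

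\emph{Necessity of the square.} First, \((a,b,c)\in\BH\) forces \((a,c)\in[-1,1]^2\), since \((1,0)\) and \((0,1)\) lie in \(\HH\); this is exactly the first step of the proof of \cref{thm:projection}. So in both directions we may assume \((a,c)\in[-1,1]^2\).

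\emph{The upper inequality.} For such \((a,c)\), \cref{thm:projection} says that \(F(a,c)\) is the largest \(b\) satisfying the upper inequality. The set of admissible \(b\) is an intersection of half-lines \(b\,xy\le 1-ax^2-cy^2\). These half-lines are nontrivial when \(xy>0\). When \(xy=0\) the constraint reduces to \(ax^2\le1\) or \(cy^2\le1\), which holds automatically on the square. Hence the admissible set is exactly \((-\infty,F(a,c)]\), and the upper inequality is equivalent to \(b\le F(a,c)\).

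\emph{The lower inequality.} This is the upper inequality for \((-a,-b,-c)\), and \((-a,-c)\) still lies in \([-1,1]^2\). So it is equivalent to \(-b\le F(-a,-c)\), that is, \(b\ge G(a,c)\). Combining the two equivalences gives the stated characterization of \(\BH\).

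\emph{Concavity and continuity.} \(F(a,c)=U(a,c)\) is an infimum of the functions
\[
 (a,c)\mapsto \frac{1-ax^2-cy^2}{xy},
\]
each of which is affine in \((a,c)\). An infimum of affine functions is concave, so \(F\) is concave on the square, and then \(G(a,c)=-F(-a,-c)\) is convex. Continuity on the whole closed square needs more than concavity, which only gives continuity in the interior. The cleanest route is to invoke the explicit piecewise formula \eqref{eq:F}: each branch is continuous on its closed region, and the branches agree on the overlaps, as was checked just before \cref{thm:projection}. The only places needing care are the square-root branches on the edges \(c=1\) and \(a=1\), and these are continuous by inspection. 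The same continuity then transfers to \(G\).

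The main (mild) obstacle is this boundary continuity. If the piecewise check feels delicate, an alternative is to use the fact that \(\BH\) is a compact convex set. Its upper boundary function \(F\) is then upper semicontinuous, and concavity together with finiteness on the polytope \([-1,1]^2\) gives lower semicontinuity. That is a standard fact for concave functions on polytopes (Gale--Klee--Rockafellar), though I would rather rely on the explicit formula.
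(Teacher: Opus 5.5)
Your proposal is correct and follows the paper's proof: upper endpoint from \cref{thm:projection}, lower endpoint by applying that theorem to $(-a,-b,-c)$, concavity of $F$ from the infimum-of-affine-functions representation \eqref{eq:defU}, and continuity from the explicit piecewise formula \eqref{eq:F}. Your extra details, namely the automatically satisfied constraints where $xy=0$ and the Gale--Klee--Rockafellar alternative for continuity up to the boundary, are sound but not needed.
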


\begin{proof}
The preceding theorem identifies the upper endpoint.  Applying it to \((-a,-c)\) identifies the lower endpoint as \(-F(-a,-c)=G(a,c)\).  The concavity of \(F\) also follows directly from \eqref{eq:defU}: it is the infimum of affine functions of \((a,c)\).  Continuity follows from the explicit formula \eqref{eq:F}: every branch is continuous on its region and the branches agree on all common boundaries.  The continuity of \(G\) then follows from \eqref{eq:G}.
\end{proof}

Let
\[
 \mathcal V=\{(a,b,c):(a,c)\in\partial[-1,1]^2,\ G(a,c)\le b\le F(a,c)\}.
\]

\begin{theorem}[Unit sphere]\label{thm:sphere}
The unit sphere is
\begin{equation}\label{eq:sphere}
 \SH=\graph(F)\cup\graph(G)\cup\mathcal V.
\end{equation}
\end{theorem}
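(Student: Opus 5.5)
The plan is to use the description of \(\BH\) in \cref{cor:ball} together with the fact that \(\normH{\cdot}\) is a norm on \(\R^3\). Under the linear identification of quadratic forms with \(\R^3\), the unit sphere \(\SH\) is exactly the topological boundary of the compact convex body \(\BH\), which has nonempty interior. By \cref{cor:ball},
\[
 \BH=\{(a,b,c):(a,c)\in[-1,1]^2,\ G(a,c)\le b\le F(a,c)\},
\]
with \(F\) and \(G\) continuous. So it suffices to compute the boundary of this vertical region between two continuous graphs over the square.

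For the inclusion \(\supseteq\), we proceed in three pieces.

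\emph{Upper graph.} \cref{thm:projection} already gives \((a,F(a,c),c)\in\SH\) for every \((a,c)\in[-1,1]^2\).

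\emph{Lower graph.} Applying \cref{thm:projection} to \((-a,-c)\) gives \(\normH{(-a,F(-a,-c),-c)}=1\). Since the norm is even, this yields \((a,G(a,c),c)\in\SH\).

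\emph{Vertical walls.} For \((a,b,c)\in\mathcal V\) we have \((a,b,c)\in\BH\), so its norm is at most \(1\). On the other hand, \(\normH{(a,b,c)}\ge\max\{|a|,|c|\}=1\), because \((1,0),(0,1)\in\HH\) and \((a,c)\in\partial[-1,1]^2\) means \(|a|=1\) or \(|c|=1\). Hence \(\mathcal V\subset\SH\).

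For the inclusion \(\subseteq\), take \((a,b,c)\in\SH\). By \cref{cor:ball}, \((a,c)\in[-1,1]^2\) and \(G\le b\le F\).
\begin{itemize}
\item If \((a,c)\) lies on the boundary of the square, the point is in \(\mathcal V\).
\item If \(b=F(a,c)\) or \(b=G(a,c)\), the point is on one of the two graphs.
\item Otherwise \((a,c)\in(-1,1)^2\) and \(G(a,c)<b<F(a,c)\).
\end{itemize}
We must rule out the last case. By continuity of \(F\) and \(G\), there is a neighbourhood \(W\) of \((a,c)\) inside \((-1,1)^2\) and an \(\varepsilon>0\) with \(G<b-\varepsilon\) and \(b+\varepsilon<F\) on \(W\). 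Then \(W\times(b-\varepsilon,b+\varepsilon)\subset\BH\), so \((a,b,c)\) is an interior point of \(\BH\).

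The norm satisfies \(\normH{tP}=t\normH{P}\) for \(t>0\). For \(t>1\) close to \(1\), the point \(t(a,b,c)\) still lies in \(\BH\), so \(t\normH{(a,b,c)}\le1\), giving \(\normH{(a,b,c)}<1\). This contradicts \((a,b,c)\in\SH\).

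The only delicate point is this last interiority argument, and it relies only on the continuity of \(F\) and \(G\) from \cref{cor:ball}. One might also worry about degenerate sections where \(G(a,c)=F(a,c)\). Those points belong to both graphs, so they cause no difficulty for the set equality \eqref{eq:sphere}.
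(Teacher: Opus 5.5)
Your proof is correct and takes essentially the same route as the paper. Both arguments read off the boundary of the region between the continuous graphs of \(G\) and \(F\) over \([-1,1]^2\) from \cref{cor:ball}, and both exclude points over the open square lying strictly between the graphs by continuity; you are more explicit than the paper, checking norm one directly on the graphs (via \cref{thm:projection} and evenness of the norm) and on \(\mathcal V\) (via \(\normH{(a,b,c)}\ge\max\{|a|,|c|\}\)), and using a scaling argument where the paper tacitly relies on \(\SH=\partial\BH\).
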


\begin{proof}
The description of \(\BH\) in \cref{cor:ball} shows that its boundary contains the upper and lower graphs.  It also contains every vertical section over the boundary of the projected square.  Conversely, if \((a,c)\) is an interior point of the square and
\(G(a,c)<b<F(a,c)\), continuity gives a neighbourhood contained in \(\BH\); hence the point is not on the sphere.  This proves \eqref{eq:sphere}.
\end{proof}

\begin{figure}[t]
\centering
\includegraphics[width=.75\textwidth]{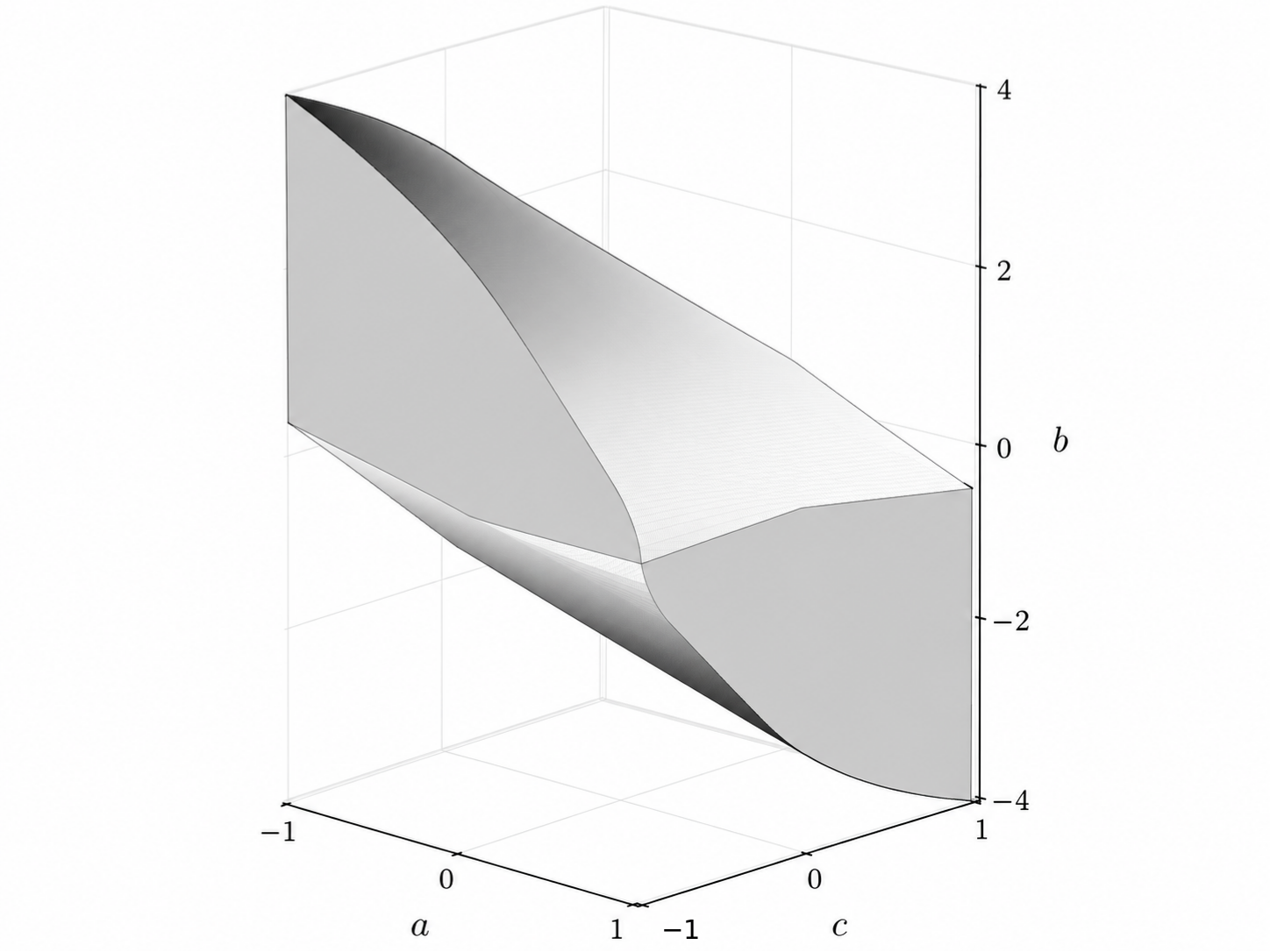}
\caption{Representation of the unit sphere $\SH$ associated with the space of quadratic forms $P(x,y)=ax^2+bxy+cy^2$ endowed with the supremum norm on the octagonal sector
$\HH=\{(x,y)\in[0,1]^2:x+y\le\sqrt2\}$, with $b$ as the vertical coordinate.  The extreme points are not superimposed on the sphere and are displayed separately in \cref{fig:extreme-set}.}
\label{fig:sphere}
\end{figure}

\begin{remark}
The norm is not absolute.  Indeed, \((1,-\tau,1)\in\SH\), whereas
\[
 \normH{(1,\tau,1)}\ge P(\tau,1)=1+2\tau^2>1.
\]
\end{remark}

\section{Extreme points of the unit ball}\label{sec:extreme}

The vertical description in \cref{cor:ball} allows us to determine the extreme points without relying on a three-dimensional plot.  We first record the precise convex-geometric criterion that will be used.

\begin{lemma}\label{lem:criterion}
Let \(D\subset\R^n\) be convex, let \(F:D\to\R\) be concave and \(G:D\to\R\) convex, with \(G\le F\), and set
\[
 K=\{(x,t)\in D\times\R:G(x)\le t\le F(x)\}.
\]
Then \((x_0,F(x_0))\) is an extreme point of \(K\) if and only if there is no nondegenerate segment \([x_1,x_2]\subset D\) such that \(x_0\) belongs to its relative interior and \(F\) is affine on \([x_1,x_2]\).
\end{lemma}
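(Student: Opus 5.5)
The proof is a direct two-way argument. It uses only the definitions of extreme point and of concavity, together with the elementary fact that a concave function which touches one of its chords at an interior point coincides with that chord. Note that \(K\) is convex because \(F\) is concave and \(G\) is convex. Also, every point \((x,F(x))\) with \(x\in D\) lies in \(K\), since \(G\le F\).

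\emph{Necessity.} Suppose there is a nondegenerate segment \([x_1,x_2]\subset D\) on which \(F\) is affine, with \(x_0=\lambda x_1+(1-\lambda)x_2\) for some \(\lambda\in(0,1)\). Affinity gives
\[
 (x_0,F(x_0))=\lambda\,(x_1,F(x_1))+(1-\lambda)\,(x_2,F(x_2)).
\]
Both points on the right lie in \(K\), and they are distinct because \(x_1\ne x_2\). Hence \((x_0,F(x_0))\) is not extreme.

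\emph{Sufficiency.} Suppose \((x_0,F(x_0))\) is not extreme. Then
\[
 (x_0,F(x_0))=\lambda(x_1,t_1)+(1-\lambda)(x_2,t_2)
\]
with distinct points \((x_i,t_i)\in K\) and \(\lambda\in(0,1)\).

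First I rule out \(x_1=x_2\). In that case \(x_1=x_2=x_0\) and \(t_1,t_2\le F(x_0)\). Since \(\lambda t_1+(1-\lambda)t_2=F(x_0)\), this forces \(t_1=t_2=F(x_0)\), contradicting distinctness. So \([x_1,x_2]\) is nondegenerate, and \(x_0\) lies in its relative interior.

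Next, combining \(t_i\le F(x_i)\) with concavity gives
\[
 F(x_0)=\lambda t_1+(1-\lambda)t_2\le\lambda F(x_1)+(1-\lambda)F(x_2)\le F(x_0).
\]
Hence every inequality is an equality. In particular, \(F\) agrees with its chord over \([x_1,x_2]\) at the interior point \(x_0\).

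\emph{Main step.} It remains to upgrade "agreement at one interior point" to "affine on the whole segment". Parametrize the segment by \(s\in[0,1]\) and let \(h\) be \(F\) minus the affine chord through \((x_1,F(x_1))\) and \((x_2,F(x_2))\). Then \(h\) is concave, \(h\ge0\) on \([0,1]\), \(h(0)=h(1)=0\), and \(h(s_0)=0\) for some \(s_0\in(0,1)\).

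Take \(s\in(0,s_0)\). Then \(s_0\) is a convex combination \(\mu s+(1-\mu)\cdot1\) with \(\mu\in(0,1]\). Concavity gives
\[
 0=h(s_0)\ge\mu h(s)+(1-\mu)h(1)\ge0,
\]
so \(h(s)=0\). The case \(s\in(s_0,1)\) is symmetric, using the endpoint \(0\). Thus \(h\equiv0\), so \(F\) is affine on \([x_1,x_2]\), which completes the proof.

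I expect no real obstacle. The only care needed is the degenerate case \(x_1=x_2\), which is handled separately above, and the observation that the conclusion does not depend on whether \(G(x_0)=F(x_0)\).
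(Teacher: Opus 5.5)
Your proposal is correct and follows the paper's argument: the affine-segment decomposition for one direction, and for the other the chain \(F(x_0)=\lambda t_1+(1-\lambda)t_2\le\lambda F(x_1)+(1-\lambda)F(x_2)\le F(x_0)\), which forces \(F\) to agree with its chord at an interior point. You additionally make explicit two steps the paper only asserts: the degenerate case \(x_1=x_2\), and the fact that a concave function touching its chord at an interior point is affine on the whole segment.
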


\begin{proof}
If \(F\) is affine on such a segment, the graph point over \(x_0\) is a nontrivial convex combination of the graph points over \(x_1\) and \(x_2\).  Conversely, suppose that
\[
 (x_0,F(x_0))=\lambda(x_1,t_1)+(1-\lambda)(x_2,t_2),
 \qquad 0<\lambda<1,
\]
with \((x_i,t_i)\in K\).  Concavity gives
\[
 F(x_0)=\lambda t_1+(1-\lambda)t_2
 \le \lambda F(x_1)+(1-\lambda)F(x_2)
 \le F(x_0).
\]
Hence equality holds throughout, so \(t_i=F(x_i)\) and equality holds in the concavity inequality.  A concave function satisfying equality at an interior point of a segment is affine on the whole segment.
\end{proof}

Put
\[
 \sigma:=\tau-\tau^2=3\sqrt2-4,
\]
and define the following four curves, where the coordinates are ordered as \((a,b,c)\):
\begin{align*}
 \Gamma_1&=\{(-1,2\sqrt{1-t},t):2\tau\le t\le1\},\\
 \Gamma_2&=\{(-1,1+\sqrt{3(1-2t)},t):-1\le t\le\sigma\},\\
 \Gamma_3&=\{(t,1+\sqrt{3(1-2t)},-1):-1\le t\le\sigma\},\\
 \Gamma_4&=\{(t,2\sqrt{1-t},-1):2\tau\le t\le1\}.
\end{align*}
We also set
\[
 p_0=(0,0,1),\qquad
 p_1=(1,0,0),\qquad
 p_2=\left(\frac12,1,\frac12\right),\qquad
 p_3=(1,-\tau,1).
\]

\begin{theorem}[Extreme points]\label{thm:extreme}
The extreme points of \(\BH\) are
\begin{equation}\label{eq:extreme}
 \ext(\BH)
 =\bigcup_{j=1}^4\bigl(\Gamma_j\cup(-\Gamma_j)\bigr)
 \cup\{\pm p_0,\pm p_1,\pm p_2,\pm p_3\}.
\end{equation}
\end{theorem}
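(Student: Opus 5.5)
We determine the extreme points by combining the vertical description of \cref{cor:ball} with \cref{lem:criterion}, using the central symmetry \(\BH=-\BH\). First, a point of \(\mathcal V\) with \(G(a,c)<b<F(a,c)\) is the midpoint of a nondegenerate vertical segment in \(\BH\), so it is not extreme. By \cref{thm:sphere}, every extreme point therefore lies on \(\graph(F)\) or on \(\graph(G)\). Since \((a,b,c)\mapsto(-a,-b,-c)\) maps \(\graph(G)\) onto \(\graph(F)\) by \eqref{eq:G}, it is enough to prove that the extreme points lying on \(\graph(F)\) are exactly
\[
\Gamma_1\cup\Gamma_2\cup\Gamma_3\cup\Gamma_4\cup\{p_0,p_1,p_2,p_3\}.
\]
By \cref{lem:criterion}, applied with \(D=[-1,1]^2\), this amounts to locating the points \((a,c)\) that lie in the relative interior of no segment in \([-1,1]^2\) on which \(F\) is affine. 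The set \(\pi_{ac}(-\Gamma_j)\) is then covered automatically by the symmetry.

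\emph{Step 1: non-extremal points.} We go through the five branches of \eqref{eq:F}.
\begin{itemize}
\item On \(\mathcal B\) and \(\mathcal D\), \(F\) is affine. Hence every point of these polygons other than a vertex lies in the relative interior of a segment on which \(F\) is affine (interior points in any direction, edge points along the edge).
\item On \(\mathcal A\), we have \(F=2\sqrt{(-a)(1-c)}\), which is positively homogeneous of degree one in \((-a,1-c)\). It is therefore affine along every ray issuing from \((0,1)\), and such a ray through a point of \(\mathcal A\) runs until it meets \(a=-1\) or the edge \(c=1\). The same holds on \(\mathcal E\) with \((1,0)\), and on \(\mathcal C\), where \(F-1\) is homogeneous in \((1-2a,1-2c)\), with rays issuing from \((\tfrac12,\tfrac12)\).
\item Consequently, every point of \(\mathcal A\cup\mathcal C\cup\mathcal E\) that is off the edges \(a=-1\) and \(c=-1\) and different from the apex is non-extreme. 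This includes points on the interior separating lines \(c=1+\tau^2a\), \(c=\tau+\tau^2a\), and so on. Each such line passes through the corresponding apex, so the ray is contained in that line, and \(F\) is affine on the \(\mathcal B\)-side of it as well. Points on the edge \(c=1\) of \(\mathcal A\) are handled because \(F\equiv0\) there.
\end{itemize}

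\emph{Step 2: candidate extremal points.} What survives Step 1 is the following. First, the vertices \((0,1)\), \((1,1)\), \((\tfrac12,\tfrac12)\), \((1,0)\) of \(\mathcal B\cup\mathcal D\), together with \((-1,2\tau)\), \((-1,\sigma)\) and their mirror images. Second, the portions of the edges \(a=-1\) and \(c=-1\) of the square lying in \(\mathcal A\) and \(\mathcal C\) (respectively \(\mathcal E\) and \(\mathcal C\)). The values of \(F\) at the four vertices are \(0,-\tau,1,0\), giving \(p_0,p_3,p_2,p_1\). On the edge \(a=-1\) the formula gives \(F=2\sqrt{1-c}\) for \(c\ge2\tau\), which is \(\Gamma_1\), and \(F=1+\sqrt{3(1-2c)}\) for \(c\le\sigma\), which is \(\Gamma_2\). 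Between them, on \(\sigma\le c\le2\tau\), the edge lies in \(\mathcal B\), where \(F\) is affine, so those points are excluded. The edge \(c=-1\) gives \(\Gamma_3\) and \(\Gamma_4\) symmetrically.

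\emph{Step 3: the candidates are extreme.} This is the main obstacle. For a point \(x_0\) on an edge of the square, any segment having \(x_0\) in its relative interior must lie in that edge. Along the edge, \(2\sqrt{1-c}\) and \(1+\sqrt{3(1-2c)}\) are strictly concave, so no such segment carries an affine \(F\). The endpoints \((-1,2\tau)\) and \((-1,\sigma)\) require a separate check: there the edge restriction of \(F\) switches from a strictly concave branch to an affine branch, and we must verify that the restriction is not affine on any neighbourhood of the point. We do this by comparing one-sided derivatives, using the agreement identities listed after \eqref{eq:F}. The corner \((-1,-1)\) is trivially extreme, since no nondegenerate segment in the square has a corner in its relative interior. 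For the four interior or edge vertices, we check every direction \(\theta\) through the vertex and show that \(F\) has a genuine kink along \(\theta\). At \((0,1)\), along \(c=1\) one has \(F=0\) for \(a\le0\) and \(F=-\tau a\) for \(a\ge0\). At \((\tfrac12,\tfrac12)\), the cone branch of \(\mathcal C\) is nondifferentiable at its apex. At \((1,1)\), the planes of \(\mathcal B\) and \(\mathcal D\) differ by \eqref{eq:endpointdifference}, so \(F\) bends along the diagonal \(a=c\); the admissible directions into the square then all cross this diagonal, and we confirm that no direction escapes the kink. This direction-by-direction verification at the vertices is where the argument needs the most care.
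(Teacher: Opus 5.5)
Your localization (Steps 1--2) is essentially the paper's. Rays from $(0,1)$, $(\tfrac12,\tfrac12)$, $(1,0)$ rule $\mathcal A,\mathcal C,\mathcal E$, the pieces $\mathcal B$ and $\mathcal D$ are planar polygons, and the surviving candidates are the four edge arcs and the four vertices. Your treatment of interior points of the edge arcs (segments confined to the edge, strict concavity) also matches the paper's Step 2. You diverge on the isolated points $p_0,\dots,p_3$, where you want to use \cref{lem:criterion} instead of the paper's contact-set argument. That argument says: if $P=\frac12(P_++P_-)$, then $Q=\frac12(P_+-P_-)$ vanishes wherever $|P|=1$. Now $y^2$, $x^2$ and $\frac12(x+y)^2$ equal $1$ on all of $L_1$, $L_3$ and $L_2$ respectively, and $x^2-\tau xy+y^2$ equals $1$ at $(1,0),(0,1),(\tau,1)$, so $Q=0$ in each case. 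Your route is legitimate in principle, but Step 3 as written has one step that fails and two vertex checks that are misjudged or incomplete.

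\emph{Junctions.} At $(-1,2\tau)$ and $(-1,\sigma)$, comparing one-sided derivatives gives nothing, because $F(-1,\cdot)$ is $C^1$ there. Both one-sided derivatives equal $-1/\tau$, since $1-2\tau=\tau^2$ and $1-2\sigma=3\tau^2$; there is no kink to detect. What works is the fact you already have: any edge segment with a junction point in its relative interior contains a nondegenerate interval of a strictly concave branch, so $F$ is not affine on it.

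\emph{The vertex $(1,1)$.} This is a corner of $[-1,1]^2$, not an interior or edge vertex. So $p_3$ is extreme by \cref{lem:criterion} for the same trivial reason as $(-1,-1)$. There are no admissible directions, and the discussion of the kink along $a=c$ is beside the point.

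\emph{The vertex $(\tfrac12,\tfrac12)$.} Nondifferentiability of the cone branch at its apex does not by itself exclude a line through the apex on which $F$ is affine; compare $-|s|$, which is affine along $s=0$. Lines through this point also pass through $\mathcal B$ and $\mathcal D$, so the cone branch alone cannot settle it. To close the gap, set $x_0=(\tfrac12,\tfrac12)$. Near $x_0$, $F-1$ is positively homogeneous, so the gradients $(-\sqrt\lambda,-1/\sqrt\lambda)$, $\tau^2\le\lambda\le\tau^{-2}$, of the cone piece are supergradients of $F$ at $x_0$. Their endpoints are exactly the gradients of the $\mathcal B$ and $\mathcal D$ planes. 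These vectors are not collinear, so $F'(x_0;\theta)+F'(x_0;-\theta)<0$ for every $\theta\ne0$, and $F$ is affine on no segment through $x_0$. Alternatively, the contact-set argument disposes of all four isolated points in a few lines with no directional analysis; that brevity is what the paper's choice buys.
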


\begin{proof}
We divide the argument into the exclusion of all other points and the verification that every point listed in \eqref{eq:extreme} is indeed extreme.

\smallskip
\noindent\emph{Step 1: localization of all possible extreme points.}
Every point in the relative interior of a nondegenerate vertical section of \(\BH\) is the midpoint of two distinct points of that section and is therefore non-extreme.  By central symmetry it is consequently enough to analyze the upper graph \(\graph(F)\).

The three nonlinear pieces of \(F\) are ruled by explicit affine generators.  For $\mathcal A$, set $\lambda=(1-c)/(-a)$ when $a<0$; the case $a=0$ reduces to the common point $(0,1)$.  The inequalities defining $\mathcal A$ are then equivalent to $0\le\lambda\le\tau^2$ and $c=1+\lambda a$.  Hence
\begin{equation}\label{eq:A-generators}
 \mathcal A
 =\bigcup_{0\le\lambda\le\tau^2}
 \{(a,1+\lambda a):-1\le a\le0\},
 \qquad
 F(a,1+\lambda a)=-2\sqrt\lambda\,a.
\end{equation}
Thus every point of \(\graph(F|_{\mathcal A})\), except for the common endpoint \(p_0\) and the endpoints lying on \(a=-1\), belongs to the relative interior of an affine segment.  The latter endpoints form \(\Gamma_1\).

Likewise, setting $\lambda=(1-a)/(-c)$ when $c<0$ gives
\begin{equation}\label{eq:E-generators}
 \mathcal E
 =\bigcup_{0\le\lambda\le\tau^2}
 \{(1+\lambda c,c):-1\le c\le0\},
 \qquad
 F(1+\lambda c,c)=-2\sqrt\lambda\,c.
\end{equation}
The only possible extreme points contributed by this piece are \(p_1\) and the curve \(\Gamma_4\).

For the central region \(\mathcal C\), write \(u=1-2a\) and, when $u>0$, set $\lambda=(1-2c)/u$; the case $u=0$ gives the common point $(1/2,1/2)$.  The defining inequalities of $\mathcal C$ give $\tau^2\le\lambda\le\tau^{-2}$.  Moreover, $a,c\in[-1,1]$ is equivalent to $0\le u\le3$ and $0\le\lambda u\le3$, that is, $0\le u\le3\min\{1,\lambda^{-1}\}$.  Therefore
\begin{equation}\label{eq:C-generators}
 \mathcal C
 =\bigcup_{\tau^2\le\lambda\le\tau^{-2}}
 \left\{
 \left(\frac{1-u}{2},\frac{1-\lambda u}{2}\right):
 0\le u\le3\min\{1,\lambda^{-1}\}
 \right\},
\end{equation}
and on every such segment
\[
 F\left(\frac{1-u}{2},\frac{1-\lambda u}{2}\right)
 =1+\sqrt\lambda\,u.
\]
All these generators have the common endpoint corresponding to \(p_2\).  Their other endpoints lie on \(a=-1\) when \(\lambda\le1\) and on \(c=-1\) when \(\lambda\ge1\); they form \(\Gamma_2\cup\Gamma_3\).

The remaining pieces \(\mathcal B\) and \(\mathcal D\) are planar.  Each is the intersection of the half-planes appearing in its definition with the square $[-1,1]^2$.  Taking the successive intersections of the corresponding boundary lines gives
\begin{align*}
 \mathcal B&=\operatorname{conv}\left\{
 (-1,2\tau),(0,1),(1,1),
 \left(\frac12,\frac12\right),(-1,\sigma)
 \right\},\\
 \mathcal D&=\operatorname{conv}\left\{
 (2\tau,-1),(1,0),(1,1),
 \left(\frac12,\frac12\right),(\sigma,-1)
 \right\}.
\end{align*}
Every nonvertex point of either polygon lies in the relative interior of a segment on which \(F\) is affine.  All their vertices have already appeared in the four curves or among \(p_0,p_1,p_2,p_3\).  By \cref{lem:criterion}, no other point of the upper graph can be extreme.

\smallskip
\noindent\emph{Step 2: the four curves are extreme.}
On the boundary edge \(a=-1\), the upper endpoint has the exact profile
\begin{equation}\label{eq:edgeF}
 F(-1,t)=
 \begin{cases}
 1+\sqrt{3(1-2t)},&-1\le t\le\sigma,\\[1mm]
 \dfrac{1-t}{\tau}+\tau,&\sigma\le t\le2\tau,\\[2mm]
 2\sqrt{1-t},&2\tau\le t\le1.
 \end{cases}
\end{equation}
The first and third branches are strictly concave, while the middle branch is affine.  Indeed,
\begin{align*}
 \frac{d^2}{dt^2}\left(1+\sqrt{3(1-2t)}\right)
 &=-\frac{\sqrt3}{(1-2t)^{3/2}}<0,\\
 \frac{d^2}{dt^2}\left(2\sqrt{1-t}\right)
 &=-\frac1{2(1-t)^{3/2}}<0.
\end{align*}
These inequalities hold on the interiors of the respective parameter intervals.  Since \(a=-1\) is an edge of the projected square, every segment in \([-1,1]^2\) having an interior point with first coordinate \(-1\) must be contained in that edge.  Therefore the only non-extreme upper graph points above \(a=-1\) are those lying over the relative interior of the affine interval \([\sigma,2\tau]\).  The remaining points are precisely \(\Gamma_2\cup\Gamma_1\), including the two junction points.  By the symmetry \((a,c)\mapsto(c,a)\), the same argument proves that every point of \(\Gamma_3\cup\Gamma_4\) is extreme.  The distinction between the curved extreme portions and the affine gap is shown in \cref{fig:edge-profile}.

\begin{figure}[t]
\centering
\includegraphics[width=.86\textwidth]{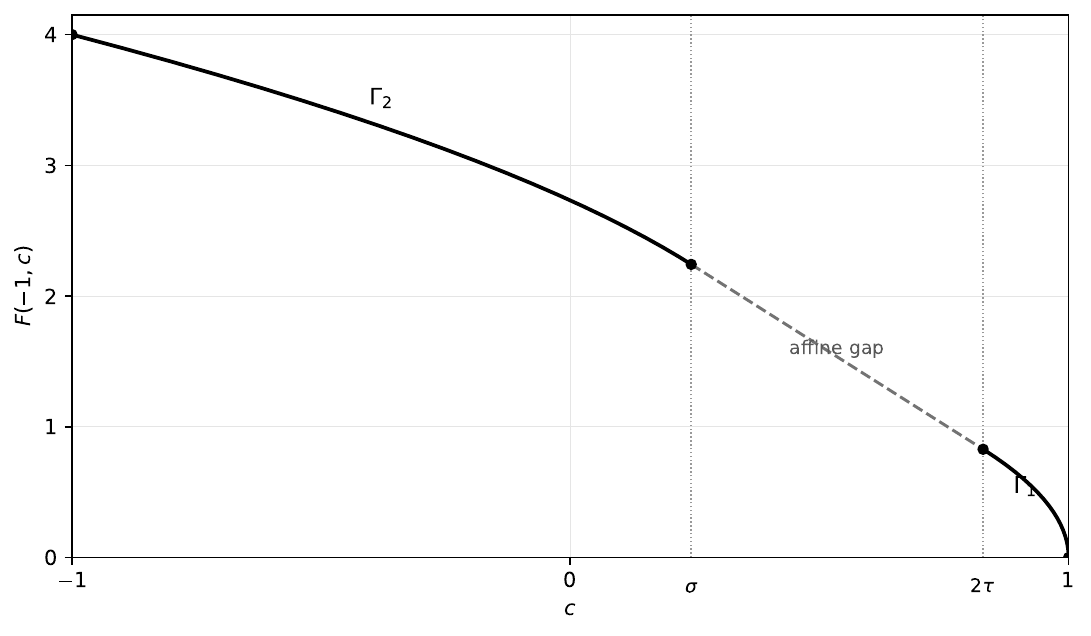}
\caption{The upper profile above the edge \(a=-1\).  The two solid curved portions give \(\Gamma_2\) and \(\Gamma_1\); every point of these portions is extreme.  The dashed middle portion is affine, so its relative interior contains no extreme points.}
\label{fig:edge-profile}
\end{figure}

\smallskip
\noindent\emph{Step 3: the four isolated points are extreme.}
We use a contact-set argument.  Suppose that a polynomial \(P\in\SH\) can be written as
\(P=(P_++P_-)/2\) with \(P_+,P_-\in\BH\), and put \(Q=(P_+-P_-)/2\).  At every point \(z\in\HH\) for which \(|P(z)|=1\), the two inequalities \(|P(z)\pm Q(z)|\le1\) force \(Q(z)=0\).

The polynomial associated with \(p_0\) is \(P_0(x,y)=y^2\), and \(P_0=1\) on the whole segment \(L_1\).  Thus \(Q(x,1)=0\) for every \(x\in[0,\tau]\), which forces the quadratic polynomial \(Q(x,1)\) to vanish identically and hence \(Q=0\).  Therefore \(p_0\) is extreme.  The same argument on \(L_3\) proves that \(p_1\), corresponding to \(P_1(x,y)=x^2\), is extreme.

The polynomial associated with \(p_2\) is
\[
 P_2(x,y)=\frac12(x+y)^2,
\]
and it equals \(1\) on all of \(L_2\).  Hence
\(Q(x,\sqrt2-x)=0\) for every \(x\in[\tau,1]\).  Comparing the three coefficients in this identically zero quadratic polynomial gives \(Q=0\), so \(p_2\) is extreme.

Finally, the polynomial associated with \(p_3\) is
\[
 P_3(x,y)=x^2-\tau xy+y^2.
\]
It takes the value \(1\) at \((1,0)\), \((0,1)\), and \((\tau,1)\).  If
\(Q(x,y)=\alpha x^2+\beta xy+\gamma y^2\), the contact-set observation yields
\[
 \alpha=Q(1,0)=0,\qquad
 \gamma=Q(0,1)=0,\qquad
 \tau\beta=Q(\tau,1)=0.
\]
Thus \(Q=0\), and \(p_3\) is extreme.

We have proved that every upper candidate is extreme and that no other upper graph point is extreme.  Applying central symmetry gives exactly \eqref{eq:extreme}.
\end{proof}

\begin{figure}[t]
\centering
\includegraphics[width=\textwidth]{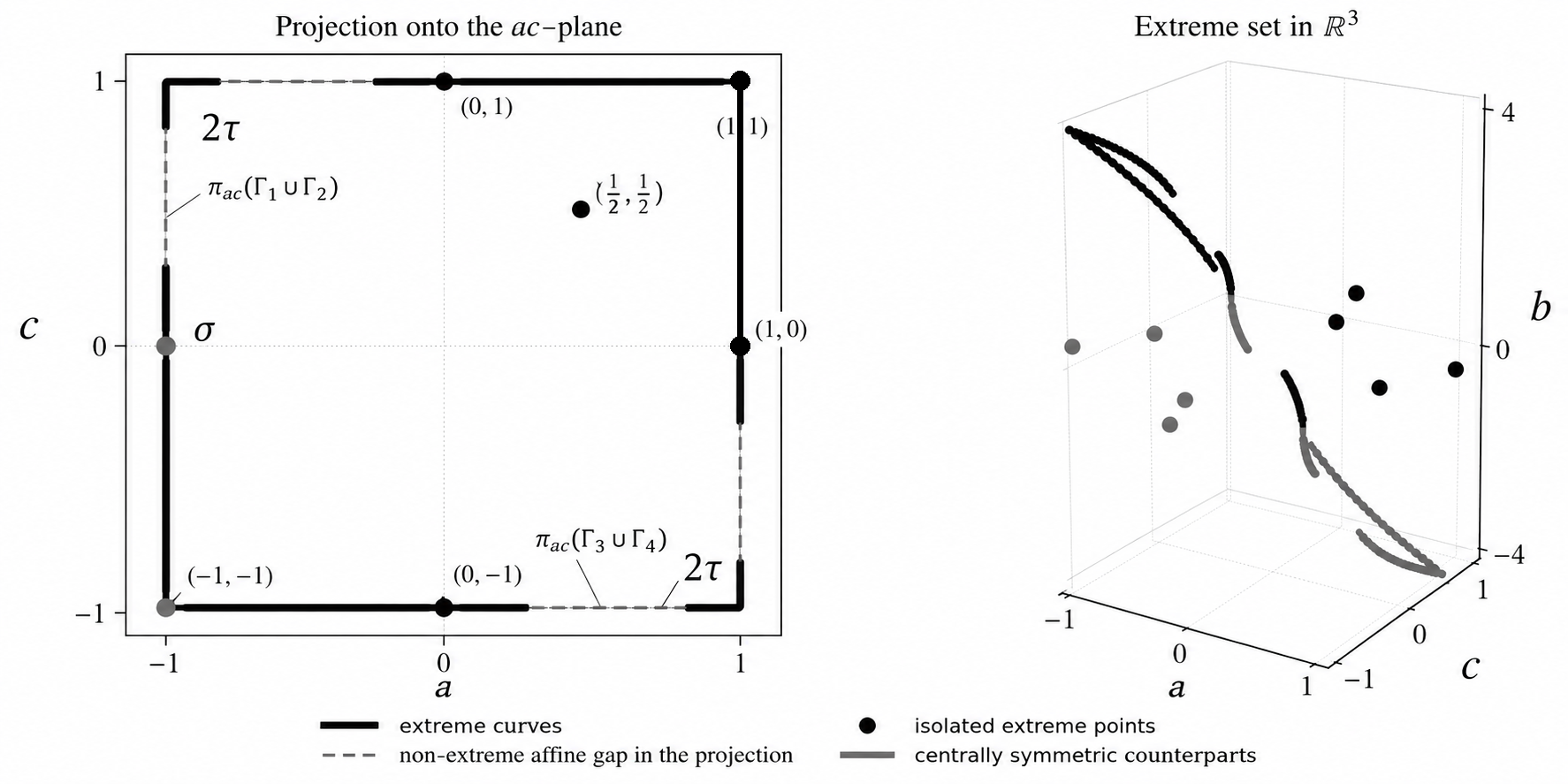}
\caption{The complete extreme set and its projection.  The left panel shows \(\pi_{ac}(\ext(\BH))\).  The solid black portions of the boundary are the projections of the curves \(\Gamma_1,\ldots,\Gamma_4\) and their negatives; the four dashed portions are the projected affine gaps and contain no relative-interior extreme points.  The black dots are the projections of \(p_0,\ldots,p_3\), and the gray dots are the projections of their negatives.  The right panel displays the centrally symmetric set \(\ext(\BH)\) in \(\R^3\): the upper curves and isolated points are black, and their negatives are gray.}
\label{fig:extreme-set}
\end{figure}

\begin{remark}
The families \(\Gamma_1,\ldots,\Gamma_4\) are genuine one-parameter curves of extreme points.  They must not be confused with the affine generators of the ruled surface pieces in \eqref{eq:A-generators}--\eqref{eq:C-generators}: the relative interiors of those generators are non-extreme.  This is why the unit-sphere plot in \cref{fig:sphere} is kept separate from the plot of the extreme set.
\end{remark}

\begin{remark}
The description in \cref{thm:extreme} reduces the maximization of any continuous convex functional on \(\BH\) to four one-parameter families and four pairs of isolated points.  This is the form needed for subsequent applications of the Krein--Milman method to sharp inequalities on \(\HH\).
\end{remark}

\FloatBarrier
\begingroup
\setlength{\emergencystretch}{2em}

\endgroup

\end{document}